\newtheorem{theorem}{Theorem}[section]
\newtheorem{proposition}[theorem]{Proposition}
\newtheorem{corollary}[theorem]{Corollary}
\newtheorem{lemma}[theorem]{Lemma}
\theoremstyle{definition}
\theoremstyle{remark}
\newtheorem{remark}[theorem]{Remark}
\newtheorem{question}[theorem]{Question}
\theoremstyle{definition}
\newcommand{\ds}{\displaystyle}
\newcommand{\R}{\ensuremath{\mathbb{R}}}
\newcommand{\N}{\ensuremath{\mathbb{N}}}
\newcommand{\Z}{\ensuremath{\mathbb{Z}}}
\newcommand{\mellp}{\ensuremath{(\ell_p,d_p)}}
\newcommand{\mclp}{\ensuremath{(L_p,d_p)}}
\newcommand{\mellq}{\ensuremath{(\ell_q,d_q)}}
\newcommand{\mclq}{\ensuremath{(L_q,d_q)}}
\newcommand{\redp}{\ensuremath{(\mathbb{R},d_p)}}
\newcommand{\redq}{\ensuremath{(\mathbb{R},d_q)}}
\newcommand{\sd}{\ensuremath{\mathsf{S}_{\mathsf{D}}}}
\newcommand{\nsd}{\ensuremath{\mathsf{NS}_{\mathsf{D}}}}
\newcommand{\nst}{\ensuremath{\mathsf{NS}_{\mathsf{T}}}}
\newcommand{\notisom}{\ensuremath{\underset{\approx}{\lhook\joinrel\relbar\joinrel\not\rightarrow}}}
\newcommand{\coarse}{\ensuremath{\underset{coarse}{\lhook\joinrel\relbar\joinrel\rightarrow}}}
\newcommand{\uniform}{\ensuremath{\underset{unif}{\lhook\joinrel\relbar\joinrel\rightarrow}}}
\newcommand{\isom}{\ensuremath{\underset{\approx}{\lhook\joinrel\relbar\joinrel\rightarrow}}}
\newcommand{\isometric}{\ensuremath{\underset{=}{\lhook\joinrel\relbar\joinrel\rightarrow}}}
\newcommand{\lisometric}{\ensuremath{\underset{\cong}{\lhook\joinrel\relbar\joinrel\rightarrow}}}
\newcommand{\lip}{\ensuremath{\underset{Lip}{\lhook\joinrel\relbar\joinrel\rightarrow}}}
\newcommand{\notlip}{\ensuremath{\underset{Lip}{\lhook\joinrel\relbar\not\joinrel\relbar\joinrel\rightarrow}}}
\newcommand{\uh}{\ensuremath{\underset{unif}{\sim}}}
\newcommand{\ce}{\ensuremath{\underset{coarse}{\sim}}}
\newcommand{\lipe}{\ensuremath{\underset{Lip}{\sim}}}
\newcommand{\almost}{\ensuremath{\underset{1+\epsilon}{\lhook\joinrel\relbar\joinrel\rightarrow}}}
\newcommand{\notalmost}{\ensuremath{\underset{1+\epsilon}{\lhook\joinrel\relbar\joinrel\not\relbar\joinrel\rightarrow}}}
\def\supp{\text{supp}}
\begin{document}

\title[Embedding snowflaked metrics and applications]{Embeddability of snowflaked metrics  with applications to the nonlinear geometry of the spaces $\mathbf{L_p}$ and $\mathbf{\ell_{p}}$  for $\mathbf{0<p<\infty}$}

\thanks{This  article  grew out of the authors stay  at the Mathematical Sciences Research Institute in Berkeley, California,  as  research members in the {\it Quantitative Geometry\/} program held in the Fall of 2011, partially supported by  an NSF grant DMS-0932078 administered by the MSRI. The authors also acknowledge the support of the Spanish Ministerio de Ciencia e Innovaci\'on Research Grant {\it Operadores, ret\'{\i}culos, y geometr\'{\i}a de espacios de Banach}, reference number MTM2008-02652/MTM}

\author[F. Albiac]{F. Albiac}\address{Mathematics Department\\ Universidad P\'ublica de Navarra\\
 Pamplona\\ 31006 Spain} \email{fernando.albiac@unavarra.es}

\author[F. Baudier]{F. Baudier}
\address{Mathematics Department\\ Texas A\&M University\\College Station, TX 77843 USA}
\email{florent@math.tamu.edu}

\subjclass[2000]{46B80, 46A16, 46T99}

\keywords{Lipschitz, uniform, and coarse embedding, snowflaked metric, Enflo-type, Hausdorff dimension}

\begin{abstract} 
We study the classical spaces $L_{p}$ and $\ell_{p}$ for the whole range $0<p<\infty$ from a metric viewpoint. As we go along,  we look over some of the results and techniques that, together with our work in this paper, have permitted to obtain a complete Lipschitz embeddability roadmap between any two of those spaces when equipped with both their ad-hoc distances and their snowflakings. Through connections with weaker forms of embeddings that lead to basic (yet fundamental) open problems, we also set the challenging goal of understanding the dissimilarities between the well-known subspace structure and the different nonlinear geometries that coexist inside $L_{p}$ and $\ell_{p}$.

\end{abstract}

\maketitle

\section{Introduction}
\subsection{Motivation}\

\medskip
\noindent If one is given a metric space it is natural to look at the metric geometry of the space at different scales. For instance, if in a non-discrete metric space we care only about small distances between points and properties that are stable under uniform embeddings we will refer to this particular geometry of  the space as {\it uniform geometry}. Similarly, the {\it coarse geometry} of an unbounded metric space will deal with large distances and properties stable under coarse embeddings. Finally the {\it Lipschitz geometry} accounts for the metric geometry at all scales and the behavior of Lipschitz embeddings. The Lipschitz geometry of finite metric spaces is a central theme in theoretical computer science and the design of algorithms and has been thoroughly investigated since the 90's both from functional analysts and computer scientists. The uniform and Lipschitz geometry of Banach spaces has been studied in some form or another since the rise of the 20th Century and culminated in the publication of the authoritative book of Benyamini and Lindenstrauss \cite{BenyaminiLindenstrauss2000} in  2000.  Introduced by Gromov in \cite{Gromov1993}, the coarse geometry of finitely generated groups turns out to be a key concept in noncommutative geometry in connection to the Baum-Connes and Novikov Conjectures as demonstrated by Yu \cite{Yu2000}. It is of great interest to understand what kind of metric spaces (mostly finitely generated groups) can be coarsely embeddable into some ``nice'' Banach spaces (e.g. Hilbert spaces). Despite the good deal of work that has been done in understanding the coarse geometry of the domain spaces, namely groups, we have a quite narrow picture regarding the coarse geometry of the target spaces, even if they are taken amongst classical Banach spaces. The main motivation of the authors is to initiate a systematic study of the coarse geometry of {\it general metric spaces}. It is clearly an ambitious task and in this paper we will attack this project under a specific angle which seems to us to be a natural place to start.

\subsection{Notation and terminology}\

\medskip
\noindent Let $(\mathcal M_{1},d_{1})$ and $(\mathcal M_{2},d_{2})$ be two unbounded metric spaces. In  nonlinear theory we are interested in knowing whether or not  we can find a copy of  $\mathcal M_{1}$  inside $\mathcal M_{2}$ in  one of the following senses:

\medskip

$\bullet$  $\mathcal M_{1}$ {\it Lipschitz embeds}  into $\mathcal M_{2}$, and we write it $\mathcal M_{1}\lip \mathcal M_{2}$ for short, if there exists a one-to-one map  $f:\mathcal M_{1}\to \mathcal M_{2}$, which for some constants $A, B>0$ satisfies  
\begin{equation}\label{Lipembedding}
A^{-1} d_{1}(x,y)\le d_{2}(f(x),f(y))\le B d_{1}(x,y), \qquad \forall x,y\in \mathcal M_{1}. 
\end{equation}
Equivalently, a 1-1 map $f:\mathcal M_{1}\to \mathcal M_{2}$ is a Lipschitz embedding if  $\omega_f(t)\le B t$ and $\rho_f(t)\ge {t}/{A}$  for some constants $A, B>0$ and all $t>0$, where
$$\rho_f(t)=\inf\{d_{2}(f(x),f(y)) : d_{1}(x,y)\geq t\},$$
and 
$$\omega_f(t)={\rm sup}\{d_{2}(f(x),f(y)) : d_{1}(x,y)\leq t\}.$$

\medskip

 $\bullet$ $\mathcal M_{1}$ {\it uniformly  embeds}  into $\mathcal M_{2}$, which is denoted by $\mathcal M_{1}\uniform \mathcal M_{2}$, if there  is an injective, uniformly continuous  map  $f:\mathcal M_{1}\to \mathcal M_{2}$ whose inverse $f^{-1}:f(\mathcal M_{1})\subset \mathcal M_{2}\to \mathcal M_{1}$ is also uniformly continuous. This equates to asking  the injective map $f$ that $\rho_f(t)>0$ for all
$t>0$ and $\lim_{t\to 0}\omega_f(t)=0$.  A uniform embedding imposes a uniformity on how the map and its inverse change distances locally. Sometimes, e.g., when the domain space and its image under $f$ are metrically convex, this implies uniformity on the changes that the embedding makes to distant points.  However, although a Banach space is  metrically convex, its image under a uniform embedding may not be so. 

\medskip

 $\bullet$ $\mathcal M_{1}$ {\it coarsely embeds}  into $\mathcal M_{2}$, denoted by $\mathcal M_{1}\coarse \mathcal M_{2}$,  if there is $f:\mathcal M_{1}\to \mathcal M_{2}$ so that  $\omega_f(t)<\infty$ for all
$t>0$ and $\displaystyle \lim_{t\to \infty}\rho_f(t)=\infty$. It is perhaps worth mentioning that a coarse embedding need not be injective nor continuous, hence this kind of embedding overlooks the structure of a metric space in the neighborhood of a point. Indeed, in contrast to uniform embeddings,  coarse embeddings only capture the structure of the space at large scales. 
\medskip

Aside from these, we will write $\mathcal M_{1}\isometric  \mathcal M_{2}$ if there exists an {\it isometric embedding} from $\mathcal M_{1}$ into $\mathcal M_{2}$, and  use $\mathcal M_{1}\almost \mathcal M_{2}$ to denote  {\it almost isometric embeddability}  of $\mathcal M_{1}$ into $\mathcal M_{2}$, i.e., if for every $\epsilon>0$ there exists a Lipschitz embedding $f_{\epsilon}:\mathcal M_{1}\to \mathcal M_{2}$ so that the distortion of the embedding, measured by the product  of the optimal constants $A$, $B$ in \eqref{Lipembedding} is smaller than $1+\epsilon$.

Two metric spaces $\mathcal M_{1}$, $\mathcal M_{2}$ will be referred to as {\it Lipschitz isomorphic} (also, Lipschitz  equivalent), {\it uniformly homeomorphic}, or  {\it coarsely homeomorphic}, if we can find a bijective embedding $f:\mathcal M_{1}\to \mathcal M_{2}$ of the specific kind. For short we will write $\mathcal M_{1} \lipe \mathcal M_{2}$, $\mathcal M_{1} \uh \mathcal M_{2}$, and $\mathcal M_{1} \ce \mathcal M_{2}$, respectively.

\medskip

Our main subject of study in this article will be the nonlinear embeddings that occur between any two members of the classical sequence spaces
$\ell_p$
and the function spaces $L_p=L_{p}[0,1]$,  for the whole range of $0<p<\infty$, when equipped either with their
 usual  distances
or their snowflakings.
Recall that for $0<p\le 1$,   the standard distances in $\ell_{p}$ and $L_{p}$ are respectively given by 
 $$d_{\ell_p}(x,y)=\sum_{n=1}^\infty \vert x_n-y_n\vert^p,$$
and $$d_{L_p}(f,g)= \int_0^1 \vert f(t)-g(t)\vert^p dt,$$
 whereas for  values of  $p$ on the other side of the spectrum, $p\ge 1$, the distance in the spaces  is induced by their norms $\Vert x\Vert_{\ell_p}=(\sum_{n=1}^\infty \vert x_n\vert^p)^{1/p},$
and $\Vert f\Vert_{L_p} = (\int_0^1 \vert f(t)\vert^p dt)^{1/p}.$
Note that for $p=1$ we have $(L_1,\Vert\cdot\Vert_{L_1})=(L_1,d_{L_1})$ and $(\ell_1,\Vert\cdot\Vert_{\ell_1})=(\ell_1,d_{\ell_1})$, hence for those two metric spaces we will drop the distances  and just write without confusion $L_1$ or $\ell_1$. For simplicity,  we will  unify the notation for the four metrics introduced above, e.g., when we write $(L_p,d_p)$ for $0<p<\infty$ it shall be understood that we endow $L_p$ with the metric $d_{L_p}(f,g)$ if $0<p\le 1$ or with the metric $\Vert f-g\Vert_{L_p}$ if $p\ge 1$.

\subsection{Organization of the paper}\

\medskip

We have divided this article in five more sections, each of which is rendered as self-contained as possible.  The flow of the exposition has a deliberate survey flavor  that, we hope, will smooth the way for understanding the topics we cover and will help the reader put the new results in the right place.

\smallskip

When a Lipschitz embedding between two metric spaces is ruled out, it is only natural  to determine whether  there exist weaker embeddings, e.g. coarse, uniform, or quasisymmetric embeddings to name a few. For $0<s<1$, the $\mathbf s$-{\it snowflaked} version of a metric space $(\mathcal M,d)$ is the metric space $(\mathcal M,d^s)$, sometimes denoted $\mathcal M^{(s)}$. It is clear that $(\mathcal M,d)$ and $(\mathcal M,d^s)$ are coarsely and uniformly equivalent and it is easy to show that they are quasisymmetrically equivalent. However, they are rarely Lipschitz equivalent. Another important remark is that a Lipschitz embedding of some snowflaking of a metric space induces an embedding of the original metric space which is simultaneously a coarse, uniform, and quasisymmetric embedding. In Section~\ref{Section2} we introduce three new classes of metric spaces in order to study the following general question:

\begin{question}\label{snow}
Let $0<s_1\neq s_2\le1$.
Under what conditions is it possible  to Lipschitz embed $(\mathcal M,d^{s_1})$
into $(\mathcal M,d^{s_2})$?
\end{question}

 To tackle this problem we will analyze how some metric invariants such as the Hausdorff dimension, Enflo type, or roundness may thwart the embeddability of snowflakings of general metric spaces.

\smallskip

 In the third section, the theme is embedding snowflakings of the real line into the metric spaces $\ell_{p}$ for $0<p\le 1$. Although our approach is infinite-dimensional in nature, it is inspired by the work of Assouad \cite{Assouad1983} on Lipschitz embeddability of $(\mathbb R, |\cdot|^{p})$ for $0<p\le 1$ into the  finite-dimensional space $\ell_{q}^{N}$ for $q\ge 1$ equipped with the standard distance.
 
\smallskip

In Section~\ref{Section5} we complete the picture of the  Lipschitz embedding theory between the  spaces $L_{p}$ and $\ell_{p}$ for $0<p<\infty$ and connect it with the unique Lipschitz subspace structure problems.

\smallskip

In the fifth  section we discuss the embeddability of snowflakings of the spaces $L_p$ and $\ell_{p}$ for $0<p<\infty$, and  show that the Mendel and Naor's isometric embeddings between $L_{p}$ spaces obtained in \cite{MendelNaor2004} have an $\ell_{p}$-space counterpart in the Lipschitz category. In light of these results we are able to give partial answers to  Question~\ref{snow}.
 
\smallskip

We close with a brief section devoted to bridge our work with  a few known results on coarse and uniform embeddings between the metric spaces $L_{p}$ and $\ell_{p}$  for $0<p<\infty$, where we also state a few open problems that seem the natural road to take to further research in this direction.

\section{Embedding snowflakings of metric spaces}\label{Section2}

\noindent Let us introduce the following three new classes of metric spaces:
\medskip

$\bullet$ ${\mathsf S_{\mathsf D}}= \Big\{(\mathcal M,d) : (\mathcal M,d^s)\lip (\mathcal M,d)\;\text{for all}\, 0<s\le1\ \Big\}$, i.e., the collection of metric spaces $\mathcal M$ that contain a subset Lipschitz equivalent to $\mathcal M^{(s)}$ for all $0<s<1$. 

\smallskip

$\bullet$ $\nsd=\Big\{(\mathcal M,d) : (\mathcal M,d^s)\notlip (\mathcal M,d)\; \text{for any}\, 0<s<1\Big\}$, that is, the class of metric spaces that cannot be the target space of a Lipschitz embedding from any $\mathcal M^{(s)}$.

\smallskip

$\bullet$ $\nst=\Big\{(\mathcal M,d) : (\mathcal M,d)\notlip (\mathcal M,d^s)\; \textrm{for any}\; 0<s<1\Big\}$, formed by those metric spaces that are not Lipschitz equivalent to any subset of their snowflakings.

\smallskip

To get started, using the metric invariance of Enflo-type and of Hausdorff dimension, we will be able to exhibit a few first members of the classes $\sd,\nsd,\nst$ and give rather general restrictions regarding the existence of a Lipschitz embedding between any two metric spaces.

\smallskip

 First we  describe briefly the notion of Enflo type,  which was  introduced by Enflo in \cite{Enflo1978} although it seems as if the term was coined in  \cite{Pisier1986b}  by Pisier.
 An $n$-dimensional {\it cube} in an arbitrary metric space $\mathcal{M}$ is a collection of $2^{n}$ not necessarily distinct  points  $C=\{x_u\}_{u\in\{-1,1\}^n}$ in $\mathcal{M}$, where each point $x_u$ in $C$ is indexed by a distinct vector $u\in\{-1,1\}^n$.   If $C$ is an $n$-dimensional cube in $\mathcal{M}$, a {\it diagonal} in $C$ is  an unordered pair of the form $\{x_u,x_{-u}\}$, i.e., a pair of vertices in $C$ whose indexing vectors differ in all their coordinates. The set of all the diagonals in $C$ will be denoted by
 $D(C)$. An {\it edge} in $C$ is an unordered pair $\{x_u,x_v\}$ such that $u$ and $v$ differ in only one coordinate. The set of all edges of $C$ is denoted by $E(C)$. Then, a  metric space $(\mathcal M, d)$ is said to have {\it Enflo-type} ${\mathbf p}\ge 1$ if there exists a constant $K>0$ such that for every $n\in\N$ and for every $n$-dimensional cube $C\subset\mathcal{M}$ the sum of the lengths of the $2^{n-1}$ diagonals in $C$ is related to the sum of the lengths of the $n2^{n-1}$ edges in $C$ by the formula

\begin{equation}\label{etype}
\displaystyle\sum_{\{a,b\}\in D(C)}d(a,b)^p\le K^p\displaystyle\sum_{\{a,b\}\in E(C)}d(a,b)^p.\end{equation}

Every metric space has Enflo-type $1$ with constant $K=1$ by the triangle inequality, so we will put
$$\textrm{E-type}(\mathcal M)=\sup\{p : \mathcal M\;\; \textrm{has Enflo-type $p$}\}.$$ 
   A metric space $\mathcal{M}$ is said to have {\it finite (supremum) Enflo-type} if $\textrm{E-type}(\mathcal M)<\infty$.

\smallskip

The first assertion of our next Lemma makes Enflo-type a powerful tool to study Lipschitz embeddability between general metric spaces. In turn,  the second assertion  will be extremely relevant when dealing with snowflakings. 

\begin{lemma}\label{enflolemma}  Let $(\mathcal M_{1},d_{1})$, $(\mathcal M_{2},d_{2})$ be metric spaces.
\begin{enumerate}
\item[(i)] If $(\mathcal M_{1},d_{1})\lip(\mathcal M_{2},d_{2})$ then  $\textrm{E-type}(\mathcal M_{1})\ge \textrm{E-type}(\mathcal M_{2})$.
\item[(ii)] Let $0<s<1$.  Then $\ds \textrm{E-type}(\mathcal M_{1}^{(s)})=\frac{\textrm{E-type}(\mathcal M_{1})}{s}.$\end{enumerate}
\end{lemma}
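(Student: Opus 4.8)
For part (i), the plan is to argue contrapositively via the scaling behavior of the Enflo-type inequality under Lipschitz maps. Suppose $f:\mathcal M_1\to\mathcal M_2$ is a Lipschitz embedding with constants $A,B$ as in \eqref{Lipembedding}, and suppose $\mathcal M_2$ has Enflo-type $p$ with constant $K$. Given any $n$-dimensional cube $C=\{x_u\}_{u\in\{-1,1\}^n}$ in $\mathcal M_1$, I would push it forward to the cube $f(C)=\{f(x_u)\}_{u\in\{-1,1\}^n}$ in $\mathcal M_2$ (which is a legitimate cube since repetitions are permitted). Applying the Enflo-type inequality for $\mathcal M_2$ to $f(C)$, then using the lower bound $d_2(f(a),f(b))\ge A^{-1}d_1(a,b)$ on the diagonal sum and the upper bound $d_2(f(a),f(b))\le B\,d_1(a,b)$ on the edge sum, yields
\[
A^{-p}\sum_{\{a,b\}\in D(C)}d_1(a,b)^p\le \sum_{\{a,b\}\in D(f(C))}d_2(\cdot)^p\le K^p\sum_{\{a,b\}\in E(f(C))}d_2(\cdot)^p\le (BK)^p\sum_{\{a,b\}\in E(C)}d_1(a,b)^p.
\]
Hence $\mathcal M_1$ has Enflo-type $p$ with constant $ABK$. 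Since the Enflo-type constant is irrelevant to the definition of $\textrm{E-type}$ as a supremum, this shows every $p$ for which $\mathcal M_2$ has Enflo-type is also attained by $\mathcal M_1$, giving $\textrm{E-type}(\mathcal M_1)\ge\textrm{E-type}(\mathcal M_2)$.

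For part (ii), the key observation is that snowflaking by $s$ acts on the inequality \eqref{etype} purely by reparametrizing the exponent. If I write the Enflo-type inequality for the snowflaked metric $d_1^s$ with exponent $q$, then
\[
\sum_{\{a,b\}\in D(C)}\bigl(d_1(a,b)^s\bigr)^q\le K^q\sum_{\{a,b\}\in E(C)}\bigl(d_1(a,b)^s\bigr)^q
\]
is literally the Enflo-type inequality for $d_1$ with exponent $sq$. Therefore $\mathcal M_1^{(s)}$ has Enflo-type $q$ if and only if $\mathcal M_1$ has Enflo-type $sq$, and taking suprema gives $\textrm{E-type}(\mathcal M_1^{(s)})=\frac1s\,\textrm{E-type}(\mathcal M_1)$. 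I would take a moment to confirm the edge case: since every metric space has Enflo-type $1$, the quantity $sq\ge 1$ i.e.\ $q\ge 1/s>1$ is automatically in the admissible range, so no degenerate behavior arises at the bottom of the range.

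I do not anticipate a genuine obstacle here, as both parts are essentially bookkeeping with the defining inequality; the only point requiring a little care is the \emph{supremum} structure of $\textrm{E-type}$. In (i) one must note that the constant $K$ degrades to $ABK$ but that this is harmless because $\textrm{E-type}$ forgets the constant. In (ii) the potential subtlety is whether the supremum is attained and whether $\textrm{E-type}(\mathcal M_1)$ could be infinite; but the biconditional ``$\mathcal M_1^{(s)}$ has Enflo-type $q\iff\mathcal M_1$ has Enflo-type $sq$'' holds at the level of each fixed exponent, so the identity of the suprema follows immediately and passes correctly to the value $+\infty$ as well.
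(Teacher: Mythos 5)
Your proof of (i) is exactly the paper's argument: push the cube forward, apply the Enflo-type inequality in $\mathcal M_2$, and absorb the Lipschitz constants to get type $p$ with constant $ABK$; for (ii) the paper simply says it ``follows readily from the definition,'' and the exponent-reparametrization you spell out (including the harmless check at the bottom of the admissible range $q\ge 1/s$) is the intended reasoning. Correct, and essentially identical in approach.
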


\begin{proof} (i) Assume $(\mathcal M_{2},d_{2})$ satisfies \eqref{etype} for some $p\ge 1$. Let $f\colon \mathcal M_{1}\to \mathcal M_{2}$ and $A,B>0$ such that $${A^{-1}}d_{1}(x,y)\le d_{2}(f(x),f(y))\le Bd_{1}(x,y),\quad \forall x,y\in \mathcal M_{1}.$$ Let $n\in\N$ and $C$ be a $n$-dimensional cube in $\mathcal{M}_1$. Then $f(C)=\{f(x_u): u\in\{-1,1\}^n\}$ is a $n$-dimensional cube in $\mathcal{M}_2$ and 
\begin{align*}
\displaystyle\sum_{\{a,b\}\in D(f(C))}d(a,b)^p & \le K^p\displaystyle\sum_{\{a,b\}\in E(f(C))}d(a,b)^p \\
&\le K^p\displaystyle\sum_{\{a,b\}\in E(C)}d(f(a),f(b))^p\\
& \le K^pB^p\displaystyle\sum_{\{a,b\}\in E(C)}d(a,b)^p
\end{align*}
But,
\begin{align*}
\displaystyle\sum_{\{a,b\}\in D(f(C))}d(a,b)^p & = \displaystyle\sum_{\{a,b\}\in D(C)}d(f(a),f(b))^p \\
&\ge A^{-p}\displaystyle\sum_{\{a,b\}\in D(C)}d(a,b)^p\\
\end{align*}
and so
\begin{align*}
\displaystyle\sum_{\{a,b\}\in D(C)}d(a,b)^p & \le A^pK^pB^p \displaystyle\sum_{\{a,b\}\in D(C)}d(a,b)^p \\
\end{align*}
(ii) follows readily from the definition of Enflo-type.
\end{proof}

  In the next straightforward lemma we see that  the Hausdorff dimension verifies an analogue of Lemma~\ref{enflolemma}  (ii) and the reverse inequality with respect to Lipschitz embeddings. We will not attempt to include here the subtle (and long!) definition of Hausdorff dimension and instead we prefer to refer to \cite{BuragoBuragoIvanov2001} and \cite{Heinonen2001} for an extended discussion of this notion.

\begin{lemma} Let $(\mathcal M_1,d_1)$ and $(\mathcal M_2,d_2)$ be metric spaces, and $0<s<1$.
\begin{enumerate}
\item[(i)] If $(\mathcal M_1,d_1)\lip(\mathcal M_2,d_2)$, then $\textrm{dim}_{\mathcal{H}}(\mathcal M_1)\le \textrm{dim}_{\mathcal{H}}(\mathcal M_2)$.
\item[(ii)] $\ds \textrm{dim}_{\mathcal{H}}(\mathcal M_1^{(s)})=\ds\frac{\textrm{dim}_{\mathcal{H}}(\mathcal M_1)}{s}$.
\end{enumerate}
\end{lemma}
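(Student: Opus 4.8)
The plan is to work directly from the cover definition of Hausdorff measure, since both assertions reduce to tracking how it behaves under, respectively, Lipschitz maps and the reparametrization $d\mapsto d^s$. For $\alpha\ge 0$ and $\delta>0$ recall the premeasure
$$\mathcal{H}^{\alpha}_{\delta}(\mathcal M,d)=\inf\Big\{\sum_{i}(\diam U_i)^{\alpha} : \mathcal M\subseteq\bigcup_i U_i,\ \diam U_i\le\delta\Big\},$$
which is monotone in $\delta$, so that $\mathcal{H}^{\alpha}(\mathcal M,d)=\lim_{\delta\to 0^+}\mathcal{H}^{\alpha}_{\delta}(\mathcal M,d)$ exists in $[0,\infty]$, and
$$\dim_{\mathcal H}(\mathcal M,d)=\inf\{\alpha\ge 0 : \mathcal{H}^{\alpha}(\mathcal M,d)=0\}.$$

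For (i) the key sub-fact is that an $L$-Lipschitz map $g\colon(X,d_X)\to(Y,d_Y)$, meaning $d_Y(g(x),g(x'))\le L\,d_X(x,x')$, does not increase Hausdorff measure: given a cover $\{U_i\}$ of $X$ with $\diam U_i\le\delta$, the images $\{g(U_i)\}$ cover $g(X)$ with $\diam g(U_i)\le L\,\diam U_i\le L\delta$, whence
$$\mathcal{H}^{\alpha}_{L\delta}(g(X))\le\sum_i(\diam g(U_i))^{\alpha}\le L^{\alpha}\sum_i(\diam U_i)^{\alpha},$$
and taking the infimum over covers and letting $\delta\to 0^+$ gives $\mathcal{H}^{\alpha}(g(X))\le L^{\alpha}\mathcal{H}^{\alpha}(X)$; in particular $\dim_{\mathcal H}(g(X))\le\dim_{\mathcal H}(X)$. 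I would apply this to the inverse $f^{-1}\colon f(\mathcal M_1)\to\mathcal M_1$ of a Lipschitz embedding $f$ as in \eqref{Lipembedding}, which is $A$-Lipschitz because $d_1(x,y)\le A\,d_2(f(x),f(y))$; this yields $\dim_{\mathcal H}(\mathcal M_1)\le\dim_{\mathcal H}(f(\mathcal M_1))$. Since $f(\mathcal M_1)\subseteq\mathcal M_2$ and Hausdorff dimension is monotone under inclusion (any cover of $\mathcal M_2$ restricts to a cover of $f(\mathcal M_1)$), $\dim_{\mathcal H}(f(\mathcal M_1))\le\dim_{\mathcal H}(\mathcal M_2)$, and (i) follows. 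Note that only the lower (co-Lipschitz) estimate is used.

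For (ii) I would first observe that passing to the snowflaking $d^s$ merely reparametrizes diameters: for any $U\subseteq\mathcal M_1$,
$$\diam_{d^s}U=\sup_{x,y\in U}d(x,y)^s=\big(\diam_d U\big)^s.$$
Consequently a family $\{U_i\}$ is admissible for $\mathcal{H}^{\alpha}_{\delta}(\mathcal M_1,d^s)$, i.e. $\diam_{d^s}U_i\le\delta$, exactly when $\diam_d U_i\le\delta^{1/s}$, and in that case $(\diam_{d^s}U_i)^{\alpha}=(\diam_d U_i)^{s\alpha}$. Matching the two infima term by term gives $\mathcal{H}^{\alpha}_{\delta}(\mathcal M_1,d^s)=\mathcal{H}^{s\alpha}_{\delta^{1/s}}(\mathcal M_1,d)$, and letting $\delta\to 0^+$ (so $\delta^{1/s}\to 0^+$ as well) yields $\mathcal{H}^{\alpha}(\mathcal M_1^{(s)})=\mathcal{H}^{s\alpha}(\mathcal M_1)$. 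Reading off the dimension and substituting $\beta=s\alpha$,
$$\dim_{\mathcal H}(\mathcal M_1^{(s)})=\inf\{\alpha:\mathcal{H}^{s\alpha}(\mathcal M_1)=0\}=\tfrac1s\inf\{\beta:\mathcal{H}^{\beta}(\mathcal M_1)=0\}=\frac{\dim_{\mathcal H}(\mathcal M_1)}{s}.$$

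As the authors anticipate, there is no genuine obstacle once the definition is in hand; the only points demanding care are bookkeeping: verifying that the premeasure limit exists and is insensitive to the rescaling $\delta\mapsto\delta^{1/s}$, and that the diameter identity lets one identify admissible covers of $(\mathcal M_1,d^s)$ with admissible covers of $(\mathcal M_1,d)$ without loss. It is worth remarking that (ii) is the exact analogue, for Hausdorff dimension, of Lemma~\ref{enflolemma}(ii), whereas the inequality in (i) runs opposite to that of Lemma~\ref{enflolemma}(i), reflecting that Hausdorff dimension behaves covariantly, rather than contravariantly, under Lipschitz embeddings.
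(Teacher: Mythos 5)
Your proof is correct, and the paper in fact offers no proof of this lemma at all (the authors label it ``straightforward'' and decline even to recall the definition of Hausdorff dimension), so your cover-counting argument is exactly the standard one they are implicitly invoking. Both halves check out: in (i) you correctly use only the lower (co-Lipschitz) bound, via the $A$-Lipschitz inverse $f^{-1}$ on $f(\mathcal M_1)$ together with monotonicity under inclusion, and in (ii) the identity $\diam_{d^s}U=(\diam_d U)^s$ and the substitution $\beta=s\alpha$ give the scaling $\mathcal{H}^{\alpha}(\mathcal M_1^{(s)})=\mathcal{H}^{s\alpha}(\mathcal M_1)$ and hence the stated formula.
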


The last two lemmas put together lead to the following proposition.
\begin{proposition}\label{restriction}
Let $(\mathcal M_{1},d_{1})$, $(\mathcal M_{2},d_{2})$ be  metric spaces, and let $0<s_1,s_2\le1$. If $(\mathcal M_{1},d_{1}^{s_1})\lip (\mathcal M_{2},d_{2}^{s_2})$, then $$\frac{\textrm{E-type}(\mathcal M_{1})}{s_1}\ge \frac{\textrm{E-type}(\mathcal M_{2})}{s_2},$$ and $$\frac{{\text dim}_{\mathcal{H}}(\mathcal M_{1})}{s_1}\le \frac{{\text dim}_{\mathcal{H}}(\mathcal M_{2})}{s_2}.$$
\end{proposition}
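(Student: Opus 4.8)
The plan is to read off both inequalities directly from the two preceding lemmas, applied not to $\mathcal M_1$ and $\mathcal M_2$ but to their snowflakings, viewed as metric spaces in their own right. The hypothesis states precisely that $\mathcal M_1^{(s_1)}=(\mathcal M_1,d_1^{s_1})$ Lipschitz embeds into $\mathcal M_2^{(s_2)}=(\mathcal M_2,d_2^{s_2})$, so I would feed exactly this embedding into the monotonicity assertions (i) of both lemmas and then eliminate the snowflake invariants using the scaling identities (ii).

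For the first inequality I would invoke Lemma~\ref{enflolemma}(i) with the pair $\big(\mathcal M_1^{(s_1)},\mathcal M_2^{(s_2)}\big)$ in the role of $(\mathcal M_1,\mathcal M_2)$, which yields
\[
\textrm{E-type}(\mathcal M_1^{(s_1)})\ge \textrm{E-type}(\mathcal M_2^{(s_2)}).
\]
I would then substitute the exact values furnished by Lemma~\ref{enflolemma}(ii), namely $\textrm{E-type}(\mathcal M_i^{(s_i)})=\textrm{E-type}(\mathcal M_i)/s_i$ for $i=1,2$, to recover the stated inequality $\textrm{E-type}(\mathcal M_1)/s_1\ge \textrm{E-type}(\mathcal M_2)/s_2$.

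For the second inequality I would argue identically using the Hausdorff-dimension lemma: its part (i) gives $\textrm{dim}_{\mathcal{H}}(\mathcal M_1^{(s_1)})\le \textrm{dim}_{\mathcal{H}}(\mathcal M_2^{(s_2)})$ (the inequality now runs in the opposite direction because, unlike Enflo type, Hausdorff dimension is nondecreasing under Lipschitz embeddings and is inflated by snowflaking), while its part (ii) gives $\textrm{dim}_{\mathcal{H}}(\mathcal M_i^{(s_i)})=\textrm{dim}_{\mathcal{H}}(\mathcal M_i)/s_i$; combining these produces $\textrm{dim}_{\mathcal{H}}(\mathcal M_1)/s_1\le \textrm{dim}_{\mathcal{H}}(\mathcal M_2)/s_2$.

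Since the scaling identities in part (ii) of each lemma were stated only for $0<s<1$ whereas the proposition permits $s_i=1$, the sole point deserving a remark is the boundary case: when $s_i=1$ the snowflaking $\mathcal M_i^{(1)}$ is the space itself, so the scaling formula holds trivially with factor $1/s_i=1$ and the argument passes through verbatim. Beyond this observation there is no genuine obstacle — the proposition is a purely formal consequence of composing the monotonicity statements with the scaling statements of the two lemmas, which is presumably why the authors label it "straightforward."
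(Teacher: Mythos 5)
Your proposal is correct and is exactly the argument the paper intends: the authors give no written proof beyond the remark that "the last two lemmas put together lead to the following proposition," and your combination of part (i) of each lemma applied to the snowflaked spaces with the scaling identities of part (ii) is precisely that. Your note on the trivial boundary case $s_i=1$ is a fair (if minor) point that the paper glosses over.
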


We are now able to partially answer Question \ref{snow} when $0<s_2<1$ and $s_1=1$, and when $s_2=1$ and $0<s_1<1$.

\begin{corollary}\label{Enflo}
Let $0<s<1$ and $(M,d)$ be a metric space with finite Enflo-type, then
$(M,d)\notlip (M,d^s)$.
\end{corollary}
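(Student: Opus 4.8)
The plan is to argue by contradiction and feed the hypothetical embedding into the behavior of Enflo-type under snowflaking. First I would suppose that $(M,d)\lip(M,d^s)$ and set $p:=\textrm{E-type}(M)$; by hypothesis $p<\infty$, and since every metric space has Enflo-type $1$ (with constant $K=1$, by the triangle inequality) we also have $p\ge 1$. The whole point is that $p$ is a finite positive number, which is precisely what the finiteness assumption buys us.

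Next I would invoke Proposition~\ref{restriction} with $\mathcal M_1=\mathcal M_2=M$, $d_1=d_2=d$, $s_1=1$, and $s_2=s$. The assumed embedding $(M,d^{1})=(M,d)\lip(M,d^{s})$ then gives
$$\frac{\textrm{E-type}(M)}{1}\ge \frac{\textrm{E-type}(M)}{s},$$
i.e. $p\ge p/s$. Equivalently, one can bypass the Proposition and use Lemma~\ref{enflolemma} directly: part (i) applied to the embedding yields $\textrm{E-type}(M)\ge \textrm{E-type}(M^{(s)})$, while part (ii) evaluates $\textrm{E-type}(M^{(s)})=p/s$, producing the same inequality $p\ge p/s$.

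Finally I would extract the contradiction. Because $p<\infty$, I may multiply $p\ge p/s$ by the positive number $s$ to obtain $ps\ge p$, that is $p(1-s)\le 0$; but $p\ge 1>0$ and $0<s<1$ force $p(1-s)>0$, which is absurd. Hence no Lipschitz embedding $(M,d)\lip(M,d^s)$ can exist. There is no genuine obstacle in the argument; the only subtlety worth flagging is that finiteness of the Enflo-type is indispensable, since the implication ``$p\ge p/s\Rightarrow$ contradiction'' collapses when $p=\infty$ (the relation $\infty\ge\infty$ carries no information), which is exactly why the hypothesis cannot be dropped.
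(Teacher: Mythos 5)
Your argument is correct and is essentially identical to the paper's proof: both set $p=\textrm{E-type}(M)\in[1,\infty)$, apply Proposition~\ref{restriction} with $s_1=1$, $s_2=s$ to the hypothetical embedding, and derive the contradiction $p\ge p/s$ with $0<s<1$. Your added remark on why finiteness of the Enflo-type is indispensable is accurate but does not change the substance.
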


\begin{proof}
Put $p=\textrm{E-type}(\mathcal M)$. We have $1\le p<\infty$, hence if $(\mathcal M,d)\lip (\mathcal M,d^s)$ it follows from Proposition \ref{restriction} that $p/s\le p$, which contradicts  $0<s<1$.
\end{proof}

It follows from Corollary \ref{Enflo} that the class $\nst$ is a rather large class of metric spaces. Indeed, any metrically convex space (i.e., every pair of points has metric midpoints), or more generally any metric space containing a line segment has Enflo-type less than 2. In particular CAT(0)-spaces, for instance metric trees, are in $\nst$. In Section~\ref{Section4} (\textsection\ref{Section4.2}) we will show that all the $L_p$-spaces and $\ell_{p}$-spaces for $0<p<\infty$ belong to $\nst$. We can prove the following corollary along the same lines.

\begin{corollary}\label{Hausdorff}
Let $0<s<1$ and $(\mathcal M,d)$ be a metric space with finite positive Hausdorff dimension. Then
$(\mathcal M,d^s)\notlip (\mathcal M,d)$
\end{corollary}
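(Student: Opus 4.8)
The plan is to mirror exactly the argument just used for Corollary~\ref{Enflo}, but now invoking the Hausdorff dimension half of Proposition~\ref{restriction} in place of the Enflo-type half. The statement we must prove is that if $0<s<1$ and $(\mathcal M,d)$ has finite positive Hausdorff dimension, then $(\mathcal M,d^s)\notlip(\mathcal M,d)$. So first I would set $D=\textrm{dim}_{\mathcal H}(\mathcal M)$ and record the hypothesis as $0<D<\infty$; the positivity is what will make the argument bite, exactly as finiteness of $p\ge 1$ was the decisive feature in the Enflo-type case.

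Next I would argue by contradiction: suppose $(\mathcal M,d^s)\lip(\mathcal M,d)$. Here the snowflaking sits on the \emph{domain}, which matches the case $s_1=s$, $s_2=1$ and $\mathcal M_1=\mathcal M_2=\mathcal M$ of Proposition~\ref{restriction}. Applying the second (Hausdorff-dimension) inequality of that proposition gives
\[
\frac{\textrm{dim}_{\mathcal H}(\mathcal M)}{s}\le \frac{\textrm{dim}_{\mathcal H}(\mathcal M)}{1},
\]
that is $D/s\le D$. Since $D>0$ we may divide through by $D$ to obtain $1/s\le 1$, i.e. $s\ge 1$, contradicting $0<s<1$. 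This closes the argument.

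The only point deserving care—and the one genuine obstacle, mild as it is—is the \emph{direction} of the snowflaking. In Corollary~\ref{Enflo} the snowflaking was on the target ($s_2=s$), whereas here it must be on the domain ($s_1=s$) for the conclusion $(\mathcal M,d^s)\notlip(\mathcal M,d)$ to be the thing we are contradicting. One must therefore be sure to read off the correct inequality from Proposition~\ref{restriction}: the Hausdorff-dimension inequality runs in the opposite sense to the Enflo-type one, $\textrm{dim}_{\mathcal H}(\mathcal M_1)/s_1\le \textrm{dim}_{\mathcal H}(\mathcal M_2)/s_2$, and it is precisely this reversal (together with the snowflaking now being on $\mathcal M_1$ rather than $\mathcal M_2$) that conspires to produce the same final contradiction $1/s\le 1$. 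I would also note why the hypothesis of \emph{positive} dimension is essential: if $D=0$ the inequality $D/s\le D$ reads $0\le 0$ and yields no contradiction, so a zero-dimensional space (e.g. a countable discrete space) need not be excluded. Beyond this bookkeeping the proof is a one-line deduction and requires no new estimates.
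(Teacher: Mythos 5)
Your proof is correct and is exactly the argument the paper intends: the authors give no separate proof for this corollary, saying only that it follows ``along the same lines'' as Corollary~\ref{Enflo}, and your deduction $D/s\le D$ from the Hausdorff-dimension half of Proposition~\ref{restriction} (with the snowflaking correctly placed on the domain) is that argument. Your added remarks on the direction of the inequality and the necessity of $D>0$ are accurate.
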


\begin{remark} Note that any metric space with finite Hausdorff-dimension, in particular any finite dimensional Banach space since the $n$-dimensional Euclidean space has Hausdorff dimension $n$, is in the class $\nsd$.
\end{remark}

In the case of  almost isometric embeddings we have a more precise description of those metric spaces not embeddable into any of their snowflakings. For this purpose we use the concept of roundness introduced by  Enflo in \cite{Enflo1970}, where it is applied to prove that an $L_{p}(\mu)$-space is not uniformly equivalent to $L_{q}(\nu)$ if $1\le p \not=q\le 2$.  A metric space $(\mathcal M, d)$ is said to have {\it roundness} $ p\ge 1$ if the following generalization of the triangle law of the distance is fulfilled  for every four points $a_{1},a_{2},a_{3},a_{4}\in \mathcal M$,
\begin{equation}\label{roundness}
d(a_{1},a_{3})^p+d(a_{2},a_{4})^p\le d(a_{1},a_{2})^p+d(a_{2},a_{3})^p+d(a_{3},a_{4})^p+d(a_{4},a_{1})^{p}.
\end{equation}
Put
$$r(\mathcal M)=\sup\{p :\mathcal  M\;   \textrm{has roundness $p$}\}.$$

Notice that by a classical tensorization argument, roundness $p$ implies Enflo type $p$ with constant $1$.

\begin{lemma}\label{miau} Let $(\mathcal M_{1}, d_{1})$, $(\mathcal M_{2}, d_{2})$ be two metric spaces.
\begin{enumerate}
\item[(i)] If $\mathcal M_{1}\almost \mathcal M_{2}$, then $r(\mathcal M_{1})\ge r(\mathcal M_{2})$.
\item[(ii)] For $0<s<1$, $\ds r(\mathcal M_{1}^{(s)})=\ds\frac{r(\mathcal M_{1})}{s}.$
\end{enumerate}
\end{lemma}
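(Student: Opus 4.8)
For the first assertion I would argue by \emph{inheritance plus a limiting argument}, and for the second by a \emph{reparametrization of the exponent}, exactly in the spirit of Lemma~\ref{enflolemma}. The essential difference from the Enflo-type situation is that roundness is defined with optimal multiplicative constant $1$ (see \eqref{roundness}), so a single almost isometric embedding will only produce roundness with a constant slightly bigger than $1$, and the clean inequality must be recovered in the limit. This is precisely the point that forces the hypothesis to be almost isometric embeddability rather than plain Lipschitz embeddability.

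For part (i), I would fix any $p$ for which $\mathcal M_{2}$ has roundness $p$ and show that then $\mathcal M_{1}$ has roundness $p$; taking the supremum over all such $p$ yields $r(\mathcal M_{1})\ge r(\mathcal M_{2})$. Fix four points $a_{1},a_{2},a_{3},a_{4}\in\mathcal M_{1}$ and $\e>0$, and let $f=f_{\e}\colon\mathcal M_{1}\to\mathcal M_{2}$ realize \eqref{Lipembedding} with optimal constants $A,B$ satisfying $AB<1+\e$. Writing $b_{i}=f(a_{i})$, the roundness-$p$ inequality for the four points $b_{1},b_{2},b_{3},b_{4}$ in $\mathcal M_{2}$ holds. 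Applying the lower estimate $d_{2}(b_{i},b_{j})\ge A^{-1}d_{1}(a_{i},a_{j})$ to the two diagonal terms and the upper estimate $d_{2}(b_{i},b_{j})\le B\,d_{1}(a_{i},a_{j})$ to the four edge terms transforms it into
\[
d_{1}(a_{1},a_{3})^{p}+d_{1}(a_{2},a_{4})^{p}\le (AB)^{p}\big(d_{1}(a_{1},a_{2})^{p}+d_{1}(a_{2},a_{3})^{p}+d_{1}(a_{3},a_{4})^{p}+d_{1}(a_{4},a_{1})^{p}\big).
\]
Since $AB<1+\e$ while both sides computed with $d_{1}$ are fixed numbers independent of $\e$, I would let $\e\to0$ to obtain the constant-free inequality \eqref{roundness} for $\mathcal M_{1}$. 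As the four points were arbitrary, $\mathcal M_{1}$ has roundness $p$, which is what is needed.

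For part (ii), the argument is purely formal. For $0<s<1$ and $q\ge1$, writing $(d_{1}^{s})^{q}=d_{1}^{sq}$ shows that the roundness-$q$ inequality for $(\mathcal M_{1},d_{1}^{s})$ is \emph{literally} the roundness-$(sq)$ inequality for $(\mathcal M_{1},d_{1})$. Hence $\mathcal M_{1}^{(s)}$ has roundness $q$ if and only if $\mathcal M_{1}$ has roundness $sq$, and the order-preserving bijection $q\mapsto sq$ identifies the two sets of admissible exponents, so taking suprema gives $r(\mathcal M_{1}^{(s)})=r(\mathcal M_{1})/s$. I would add that since every metric space has roundness at least $1$, one has $r(\mathcal M_{1})\ge1$, so the exponents controlling the supremum lie in the range $q\ge 1/s>1$, where both definitions are in force and no boundary effect at $q=1$ can perturb the value of the supremum.

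The step I expect to be the main obstacle, and the one I would write most carefully, is the passage to the limit in (i): a fixed almost isometric embedding yields only roundness with constant $(AB)^{p}>1$, which is strictly weaker than roundness $p$. The resolution is to hold the four points fixed while letting $\e$ and the embedding $f_{\e}$ vary, so that the fixed left- and right-hand sides force the limiting inequality with constant $1$. This also explains transparently why, in contrast to Enflo type in Lemma~\ref{enflolemma}(i), a genuine Lipschitz (rather than almost isometric) embedding would be insufficient here, as it would leave an irreducible residual constant.
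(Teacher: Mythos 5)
Your proof of (i) is correct and is essentially identical to the paper's: fix the four points and $\epsilon>0$, apply the roundness-$p$ inequality in $\mathcal M_{2}$ to their images under $f_{\epsilon}$, transfer back using the lower estimate on the diagonals and the upper estimate on the edges at the cost of a factor $(AB)^{p}\le(1+\epsilon)^{p}$, and let $\epsilon\to 0$. The paper leaves (ii) as an exercise, and your reparametrization argument --- including the observation that exponents below $1$ cannot affect the supremum because $r(\mathcal M_{1})\ge 1$ --- fills it in correctly.
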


\begin{proof} We only prove (i) and leave (ii) as an exercise. 
Let  $a_{1},a_{2},a_{3},a_{4}$ in $\mathcal M_{1}$ and pick $\epsilon>0$. There exists $f=f_\epsilon\colon \mathcal M_{1}\to \mathcal M_{2}$ and constants  $A,B>0$ such that $A^{-1}d_{1}(x,y)\le d_{2}(f(x),f(y))\le B d_{1}(x,y)$ for all $x,y\in \mathcal M$, with $AB\le 1+\epsilon$. Assume that $\mathcal M_2$ has roundness $p$, then
\begin{align*}
&d_{1}(a_{1},a_{3})^p+d_{1}(a_{2},a_{4})^p  \le  A^p [d_{2}(f(a_{1}),f(a_{3}))^p+d_{2}(f(a_{2}),f(a_{4}))^p] \\
& \le A^p[d_{2}(f(a_{1}),f(a_{2}))^p+d_{2}(f(a_{2}),f(a_{3}))^p +d_{2}(f(a_{3}),f(a_{4}))^p+d_{2}(f(a_{4}),f(a_{1}))^p]\\
&\le  A^pB^p[d_{1}(a_{1},a_{2})^p+d_{1}(a_{2},a_{3})^p +d_{1}(a_{3},a_{4})^p+d_{1}(a_{4},a_{1})^p]\\
& \le (1+\epsilon)^p[d_{1}(a_{1},a_{2})^p+d_{1}(a_{2},a_{3})^p +d_{1}(a_{3},a_{4})^p+d_{1}(a_{4},a_{1})^p].
\end{align*}
Letting $\epsilon\to 0$ we get the roundness $p$ inequality for $\mathcal M_1$.
\end{proof}

\begin{proposition}
Let $0<s<1$ and $(\mathcal M,d)$ be a metric space with finite roundness. Then
$(\mathcal M,d)\notalmost (\mathcal M,d^s)$.
\end{proposition}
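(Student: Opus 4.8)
The plan is to run exactly the argument of Corollary~\ref{Enflo}, replacing Enflo-type by roundness and Lipschitz embeddability by almost-isometric embeddability; the two facts recorded in Lemma~\ref{miau} are tailored precisely for this. I would argue by contradiction and suppose $(\mathcal M,d)\almost(\mathcal M,d^s)$, i.e. $\mathcal M_1:=(\mathcal M,d)$ almost-isometrically embeds into $\mathcal M_2:=(\mathcal M,d^s)=\mathcal M^{(s)}$.

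Set $p:=r(\mathcal M)$. The finite-roundness hypothesis gives $p<\infty$, while the observation that every metric space has roundness $1$ gives $p\ge 1$, so in particular $p>0$. The universal lower bound $r(\mathcal M)\ge 1$ follows by averaging the four triangle inequalities of the form $d(a_i,a_k)\le d(a_i,a_j)+d(a_j,a_k)$ across the two diagonals $\{a_1,a_3\}$ and $\{a_2,a_4\}$, which is the same elementary fact that underlies the remark that roundness $p$ forces Enflo type $p$.

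Next I would feed the assumed almost-isometric embedding into Lemma~\ref{miau}(i) to obtain $r(\mathcal M)\ge r(\mathcal M^{(s)})$, and then invoke Lemma~\ref{miau}(ii) to rewrite the right-hand side as $r(\mathcal M^{(s)})=r(\mathcal M)/s=p/s$. Combining these gives $p\ge p/s$, and dividing through by $p>0$ yields $1\ge 1/s$, i.e. $s\ge 1$, contradicting the standing assumption $0<s<1$.

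There is no genuine obstacle here beyond making sure the two ingredients align: the one point that needs care is the strict positivity $r(\mathcal M)>0$, since the contradiction is extracted by cancelling $p$ from both sides of $p\ge p/s$. This is exactly where the universal lower bound $r(\mathcal M)\ge 1$ enters, and it is the roundness analogue of the fact that Enflo-type is always at least $1$ which made Corollary~\ref{Enflo} go through. Everything else is carried by the scaling identity of Lemma~\ref{miau}(ii), the almost-isometric/roundness mirror of Lemma~\ref{enflolemma}(ii).
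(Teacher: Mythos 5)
Your proof is correct and follows essentially the same route as the paper: set $p=r(\mathcal M)$, use $1\le p<\infty$, and combine Lemma~\ref{miau}(i) and (ii) to derive $p\ge p/s$, contradicting $0<s<1$. The extra justification you give for the universal lower bound $r(\mathcal M)\ge 1$ is a harmless elaboration of a fact the paper takes for granted.
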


\begin{proof} Let $p=r(\mathcal M)$. We have $1\le p<\infty$, hence if $(\mathcal M,d)\almost (M,d^s)$ it follows from  Lemma~\ref{miau} that $p\ge{p}/{s}$, which contradicts  $0<s<1$.
\end{proof}

Combining with the characterization of metric spaces with infinite roundness from \cite{Westonandall}, gives:

\begin{corollary}
If $(\mathcal M,d)$ is not an ultrametric space then for every $0<s<1$, $(\mathcal M,d)\notalmost (\mathcal M,d^s)$.
\end{corollary}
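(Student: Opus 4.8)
The plan is to obtain this corollary with essentially no new computation, by feeding the characterization of infinite-roundness spaces from \cite{Westonandall} into the Proposition that immediately precedes it. Recall that that Proposition asserts: if $(\mathcal M,d)$ has \emph{finite} roundness, then $(\mathcal M,d)\notalmost(\mathcal M,d^s)$ for every $0<s<1$. Consequently the whole task reduces to verifying a single implication about the hypothesis, namely that a space which is not ultrametric has finite roundness, i.e.\ that $r(\mathcal M)<\infty$.

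This is exactly the content of the cited characterization. The result of \cite{Westonandall} identifies the metric spaces of infinite roundness as precisely the ultrametric spaces; in the notation of this section, $r(\mathcal M)=\infty$ if and only if $(\mathcal M,d)$ satisfies the strong (ultrametric) triangle inequality. I would apply the contrapositive of the only direction I need: if $(\mathcal M,d)$ is \emph{not} ultrametric, then $r(\mathcal M)<\infty$. Since every metric space has roundness at least $1$, this places us squarely in the range $1\le r(\mathcal M)<\infty$ required by the Proposition, and invoking that Proposition directly yields $(\mathcal M,d)\notalmost(\mathcal M,d^s)$ for all $0<s<1$, as claimed.

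First I would quote the precise statement from \cite{Westonandall}, taking care to use only the implication ``not ultrametric $\Rightarrow$ finite roundness'' and noting that the reverse implication (ultrametrics have infinite roundness) is not needed here. The main --- indeed the only --- obstacle is external to this paper: it resides entirely in the cited characterization, whose substantive half converts the combinatorial failure of the ultrametric inequality into a genuine finite upper bound on the roundness exponent. Once that fact is taken as a black box, the corollary follows at once from the preceding Proposition (itself resting on Lemma~\ref{miau}), and no further estimates are required.
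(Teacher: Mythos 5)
Your proposal is correct and is exactly the argument the paper intends: the corollary is stated as an immediate consequence of the preceding Proposition together with the characterization from \cite{Westonandall} that infinite roundness is equivalent to being ultrametric, so non-ultrametric forces $1\le r(\mathcal M)<\infty$ and the Proposition applies. No further comment is needed.
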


\section{Embedding snowflakings of the real line}\label{Section3}

\noindent  Here we consider embeddings of power transforms of the Euclidean real line, namely $\redp$ for $0<p\le 1$ where $d_p$ is the $p$-th power of the absolute value. 

First, notice that $\redp$ isometrically embeds into $\mellp$. If $0<p,q\le 1$ the identity map between $\redp$ and $\redq$ is simultaneously a coarse and uniform equivalence and therefore $\redp$ is uniformly and coarsely embeddable into $\mellq$.

 Now, if $0<q<p\le 1$ this can be expressed in terms of snowflakings. Indeed the identity mapping from $\redq$ into $\redp$ satisfies $d_{p}(x,y)=d_{q}(x,y)^{p/q}$, where $p/q\le 1$. It implies that the $p/q$-snowflaked version of $(\R,d_{q})$ embeds isometrically into $\redp$, therefore into $\mellp$.

 If $0<p\neq q\le 1$, it is easy to see that $\redp$ does not admit a Lipschitz copy of $\redq$ using either the Enflo-type or the Hausdorff dimension argument. Indeed the Euclidean real line has (supremum) Enflo-type 2 and Hausdorff dimension 1. Actually we can prove a much  stronger result.

\begin{proposition}\label{nolabel}
If $0<p<q\le 1$, there is  no nonconstant Lipschitz map from $\redq$ into $\mellp$ and, consequently, there is no Lipschitz embedding  from $\redq$ into $\mellp$.
\end{proposition}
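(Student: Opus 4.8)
The plan is to reduce the infinite-dimensional problem to an elementary one-variable fact by projecting onto coordinates. Suppose, toward a contradiction, that $f=(f_n)_{n=1}^\infty\colon\redq\to\mellp$ is a nonconstant Lipschitz map, so that there is a constant $B>0$ with
\[
\sum_{n=1}^\infty |f_n(x)-f_n(y)|^p \le B\,|x-y|^q,\qquad \forall\, x,y\in\R.
\]
First I would isolate a single coordinate: since each summand is dominated by the whole sum, for every fixed $n$ we get $|f_n(x)-f_n(y)|^p\le B|x-y|^q$, that is,
\[
|f_n(x)-f_n(y)|\le B^{1/p}\,|x-y|^{q/p},\qquad \forall\, x,y\in\R.
\]

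The crux is the hypothesis $p<q$, which forces $q/p>1$: each coordinate function $f_n$ is thus H\"older continuous on $\R$ with an exponent strictly greater than $1$. The next step is to invoke the elementary fact that such a function must be constant. Concretely, the difference quotient satisfies
\[
\frac{|f_n(x+h)-f_n(x)|}{|h|}\le B^{1/p}\,|h|^{\,q/p-1},
\]
and the right-hand side tends to $0$ as $h\to 0$ because the exponent $q/p-1$ is positive; hence $f_n$ is everywhere differentiable with $f_n'\equiv0$, and therefore constant. As this holds for every $n\in\N$, the map $f$ is itself constant, contradicting our assumption.

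Finally, the ``consequently'' clause is immediate: a Lipschitz embedding is in particular injective and so cannot be constant on the infinite set $\R$, whence the first assertion rules it out. I do not expect a genuine obstacle here, since the argument is short; the only point that deserves care is the reduction to coordinates, which is legitimate precisely because $d_p$ on $\ell_p$ is the \emph{unweighted} sum of the coordinatewise $p$-th powers, so each coordinate difference is controlled by the full $d_p$-distance.
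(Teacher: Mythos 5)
Your proof is correct and takes essentially the same route as the paper's: both reduce to scalar-valued functions on $\R$ that are H\"older with exponent $q/p>1$ and hence constant. The only cosmetic difference is that the paper composes $\varphi$ with arbitrary norm-one functionals in $\ell_p^{\ast}$ and appeals to metric convexity of $\R$, whereas you use the coordinate functionals and an explicit vanishing-difference-quotient argument --- a concrete instance of the same idea, with no gaps.
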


\begin{proof}  Let $0<p<q\le 1$ and suppose there is $\varphi: \mathbb R\to \ell_{p}$ that satisfies a Lipschitz condition
$$
\Vert \varphi(s)-\varphi(t)\Vert_{p}^{p}\le K \vert s-t\vert^{q}, \qquad \forall s,t \in \mathbb R,
$$
for some $K>0$. Without loss of generality we assume $\varphi(0)=0$.
 We compose $\varphi$ with $x^{\ast}\in \ell_{p}^{\ast}$ of norm $\Vert x^{\ast}\Vert \le 1$ to obtain
$$
|x^{\ast}\circ\varphi(s)-x^{\ast}\circ\varphi(t)|\le \Vert x^{\ast}\Vert \Vert \varphi(s)-\varphi(t)\Vert_{p} \le K^{1/p} \vert s-t\vert^{q/p}, \qquad \forall s,t\in \mathbb R.
$$
Since $(\mathbb R, |\cdot|)$ is a metrically convex space and  $q/p>1$ we deduce that $x^{\ast}\circ\varphi(t)=0$ for all $t\in [0,1]$. But $\ell_{p}^{\ast}$ separates the points of $\ell_{p}$, which forces $\varphi$ to be $0$. \end{proof}

\begin{remark} The Enflo-type argument is inconclusive in this situation since $\redq$ has Enflo-type $2/q$ and $\mellp$ has Enflo-type $1$.
\end{remark}

For doubling metric spaces (in particular the real line) Assouad  \cite{Assouad1983} proved the following deep result:
\begin{theorem}[Assouad]
Let $0<s\le 1$ and $(\mathcal{M},d)$ be a doubling space. Then $(\mathcal{M},d^s)\lip \ell_2^N=(\R^N,\Vert\cdot\Vert_2)$ for some $N\ge 2$.
\end{theorem}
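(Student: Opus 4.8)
The plan is to reconstruct Assouad's embedding by a multiscale net construction: I combine Lipschitz bump functions living at the dyadic scales $2^{-k}$, $k\in\Z$, weight them by the snowflaking factor $2^{-ks}$, and keep the target dimension finite by exploiting the doubling property through a colouring of each net together with a separation of the scales into residue classes modulo a large integer $m$. First I would record the combinatorial consequence of doubling that drives everything. For each $k$ fix a maximal $2^{-k}$-separated set $\mathcal N_k=\{x_{k,i}\}_i$, which is automatically a $2^{-k}$-net. Doubling bounds the number of points of $\mathcal N_k$ lying in any ball of radius $O(2^{-k})$ by a constant depending only on the doubling constant; hence the graph on $\mathcal N_k$ joining points at mutual distance $\le(4+\delta)2^{-k}$ has bounded degree and admits a proper colouring $\mathrm{col}_k:\mathcal N_k\to\{1,\dots,L\}$ with $L=L(\text{doubling},\delta)$ colours. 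Two net points of the same colour are then at distance $\ge(4+\delta)2^{-k}$.

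Next, fixing a plateau bump $g:[0,\infty)\to[0,1]$ with $g\equiv 1$ on $[0,1]$, $\supp g\subset[0,2]$ and $g$ $1$-Lipschitz, I set $\psi_{k,i}(x)=2^{-ks}g\bigl(2^{k}d(x,x_{k,i})\bigr)$, so that $\psi_{k,i}$ has height $2^{-ks}$ on $B(x_{k,i},2^{-k})$, is supported in $B(x_{k,i},2\cdot 2^{-k})$, and is $2^{k(1-s)}$-Lipschitz for $d$. I then choose $m\in\N$ large (to be fixed in terms of $s$ and $\delta$) and for each residue $r\in\{0,\dots,m-1\}$ and colour $\ell\in\{1,\dots,L\}$ define the single coordinate $F_{r,\ell}(x)=\sum_{k\equiv r\, (m)}\sum_{i:\,\mathrm{col}_k(x_{k,i})=\ell}\psi_{k,i}(x)$. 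Since same-colour supports are disjoint, at most one inner term is nonzero per scale, so each sum converges, and the embedding is $F=(F_{r,\ell})_{r,\ell}:\mathcal M\to\R^{N}$ with $N=mL$.

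For the upper (expansion) bound, given $x,y$ I split the scales at the critical index $n$ with $2^{-n}\approx d(x,y)$: at fine scales $k>n$ the height bound $2\cdot 2^{-ks}$ produces a geometric sum $\lesssim d(x,y)^s$ (using $s>0$), while at coarse scales $k\le n$ the Lipschitz bound $2^{k(1-s)}d(x,y)$ produces a geometric sum $\lesssim 2^{n(1-s)}d(x,y)\approx d(x,y)^s$ (using $s<1$, which is exactly where the snowflaking hypothesis enters). Summing over the $N$ coordinates yields $\|F(x)-F(y)\|_2\lesssim d(x,y)^s$. For the lower bound, pick $k$ with $3\cdot 2^{-k}\le d(x,y)<6\cdot 2^{-k}$ and let $x_{k,i}$ satisfy $d(x,x_{k,i})\le 2^{-k}$; then $\psi_{k,i}(x)=2^{-ks}$ while $d(y,x_{k,i})\ge 2\cdot 2^{-k}$ forces $\psi_{k,i}(y)=0$, so this single bump contributes exactly $2^{-ks}$ to the coordinate $(r_0,\ell_0)=(k\bmod m,\ \mathrm{col}_k(x_{k,i}))$.

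The main obstacle is to guarantee that the other scales feeding into this same coordinate cannot cancel that contribution; this is precisely what the two devices are designed for. Those scales are $k'=k\pm jm$, and I would bound their total interference in absolute value by $\sum_{j\ge 1}2\cdot 2^{-(k+jm)s}+\sum_{j\ge 1}\delta^{-1}2^{(k-jm)(1-s)}d(x,y)$. The crucial estimate here is $|\psi_{k',a}(x)-\psi_{k',b}(y)|\le \delta^{-1}2^{k'(1-s)}d(x,y)$ at coarse scales, which is exactly what the colouring gap $\delta$ buys: when $x$ and $y$ fall into distinct same-colour supports one necessarily has $d(x,y)\ge\delta 2^{-k'}$, and in every remaining case the single-bump Lipschitz estimate applies directly. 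Both sums are geometric with ratio a negative power of $2^{m}$, so choosing $m$ large makes the interference $<\tfrac12 2^{-ks}$, whence $|F_{r_0,\ell_0}(x)-F_{r_0,\ell_0}(y)|\ge\tfrac12 2^{-ks}\gtrsim d(x,y)^s$. Together with the upper bound this shows $F$ is a Lipschitz embedding of $(\mathcal M,d^s)$ into $\R^{N}$, and I note that the whole construction goes through with all bump signs positive, so no probabilistic sign-selection is needed for this version of the theorem, in which $N=N(s,\text{doubling})$ is permitted to depend on $s$.
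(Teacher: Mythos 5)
The paper offers no proof of this statement: it is quoted from \cite{Assouad1983}, and the authors explicitly sidestep it, proving instead the infinite-dimensional relative Theorem~\ref{ellpsnow}, where the whole difficulty of Assouad's theorem --- the finite dimension of the target --- is given up by allotting a separate coordinate to every pair $(j,k)$ and where neither doubling, nor a colouring, nor a mod-$m$ splitting of scales is needed. Your proposal is therefore a reconstruction of Assouad's actual argument, and its architecture is the standard correct one: dyadic nets, bumps of height $2^{-ks}$ and Lipschitz constant $2^{k(1-s)}$, doubling converted into a bounded-degree colouring so that many bumps share one coordinate, and a residue-class splitting of scales to make cross-scale interference geometrically small. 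You also correctly locate where the snowflaking enters (both geometric series need $0<s<1$); note in passing that the paper's ``$0<s\le 1$'' is a slip, since for $s=1$ the conclusion is false (the Heisenberg group is doubling but admits no bi-Lipschitz embedding into any $\R^N$).

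There is, however, a concrete gap in your lower bound: you rule out interference in the coordinate $(r_0,\ell_0)$ coming from \emph{other scales}, but not from \emph{other bumps of the same colour at the same scale} $k$, and with your separation threshold $(4+\delta)2^{-k}$ this interference can cancel the main term exactly. If $y\in\supp\psi_{k,j}$ for some $j\neq i$ with $\mathrm{col}_k(x_{k,j})=\ell_0$, the triangle inequality only forces $d(x,y)>(1+\delta)2^{-k}$, which is perfectly compatible with your window $3\cdot 2^{-k}\le d(x,y)<6\cdot 2^{-k}$; the scale-$k$ contribution is then $\psi_{k,i}(x)-\psi_{k,j}(y)$ rather than $2^{-ks}$, and it can vanish. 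On $\R$ with $\mathcal N_k=2^{-k}\Z$ coloured by residues modulo $6$, taking $x=0$ and $y=5\cdot 2^{-k}$ realizes this cancellation. The repair is routine but must be made: enlarge the colouring graph so that same-colour net points are at distance $\ge 9\cdot 2^{-k}$, say; then for $k$ in your window one gets $d(y,x_{k,j})\ge 9\cdot 2^{-k}-2^{-k}-6\cdot 2^{-k}=2\cdot 2^{-k}$, so $y$ misses every colour-$\ell_0$ support at scale $k$ and the main term survives. A second, cosmetic, defect is that the coordinates $F_{r,\ell}(x)$ as you define them diverge on an unbounded space, since at coarse scales the heights $2^{-ks}$ blow up and some colour captures $x$ infinitely often; you must anchor at a basepoint and sum $\psi_{k,i}(x)-\psi_{k,i}(x_0)$, exactly as the paper does in the remark following Theorem~\ref{ellpsnow}. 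Your difference estimates are unaffected by this normalization.
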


The proof of Assouad's theorem is subtle and a special attention is given on estimating the dimension of the target space (see \cite{Kahane1981}, \cite{Talagrand1992}, \cite{NaorNeiman}, \cite{AbrahamBartalNeiman2011}  for refinements and discussion of this result). In this paper we deal mainly with infinite-dimensional target spaces and we do not need the full power of Assouad's embedding. The proof of the next theorem makes use of a simplification of the Assouad-type embedding since we allow infinitely many maps, hence infinitely many coordinates.

\begin{theorem}\label{ellpsnow}
For $0<p<q$ there exist real-valued maps $(\psi_{j,k})_{(j,k)\in \Z}$ and positive constants $A_{p,q}, B_{p,q}$ such that 
\begin{equation}\label{longproof} A_{p,q}\vert x-y\vert^p\le \sum_{k\in\Z}\sum_{j\in\Z}\vert \psi_{j,k}(x)-\psi_{j,k}(y)\vert^q\le B_{p,q}\vert x-y\vert^p,
\end{equation}
for all $x,y\in \R.$
\end{theorem}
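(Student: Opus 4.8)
The plan is to realize the stated two-sided estimate as an Assouad-type embedding of the snowflake $(\R,|\cdot|^{s})$ with $s=p/q\in(0,1)$ into $\ell_{q}$: taking $q$-th roots in \eqref{longproof} turns it into $A_{p,q}^{1/q}|x-y|^{s}\le\|\Psi(x)-\Psi(y)\|_{q}\le B_{p,q}^{1/q}|x-y|^{s}$ for $\Psi(x)=(\psi_{j,k}(x))_{(j,k)\in\Z^{2}}$. Since we are free to use infinitely many coordinates we do not need any coloring argument to control the dimension, so I would fix one Lipschitz bump function $\phi\colon\R\to\R$ with $\supp\phi\subset[-1,1]$, $\|\phi\|_{\infty}\le1$, and finite Lipschitz constant (the tent $\phi(t)=\max(0,1-|t|)$ will do), let $k$ run over dyadic scales and $j$ over integer translates, and set $\psi_{j,k}(x)=2^{-ks}\phi(2^{k}x-j)$. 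The amplitude $2^{-ks}$ is exactly the weight that makes all scales interact correctly under the exponent $q$, since $(2^{-ks})^{q}=2^{-kp}$.

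For the \emph{upper bound} I would fix $x\ne y$, write $t=|x-y|$, and split the double sum according to whether $2^{-k}\ge t$ (coarse scales) or $2^{-k}<t$ (fine scales). The compact support of $\phi$ gives the routine fact used throughout: for every scale $k$ at most a bounded number (independent of $k,x,y$) of indices $j$ yield a nonzero term $\psi_{j,k}(x)-\psi_{j,k}(y)$. On coarse scales I would use the Lipschitz bound $|\psi_{j,k}(x)-\psi_{j,k}(y)|\le 2^{-ks}\Lip(\phi)\,2^{k}t=C2^{k(1-s)}t$, so each scale contributes $\lesssim 2^{kq(1-s)}t^{q}$; summing this geometric series (convergent because $q(1-s)>0$) down from the critical scale $2^{-k}\approx t$ produces $\lesssim t^{q-q(1-s)}=t^{qs}=t^{p}$. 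On fine scales I would instead use $|\psi_{j,k}(x)-\psi_{j,k}(y)|\le 2\cdot2^{-ks}$, so each scale contributes $\lesssim 2^{-ksq}=2^{-kp}$; summing up from the critical scale again gives $\lesssim t^{p}$. Adding the two halves yields $B_{p,q}$.

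For the \emph{lower bound} I would simply discard all scales but one. Given $t=|x-y|$, pick the unique integer $k_{0}$ with $2^{k_{0}}t\in[a,2a)$ for a fixed annulus, and note that since every term is nonnegative, $\sum_{j,k}|\psi_{j,k}(x)-\psi_{j,k}(y)|^{q}\ge 2^{-k_{0}p}F\!\left(2^{k_{0}}x,2^{k_{0}}y\right)$, where $F(u,v)=\sum_{j\in\Z}|\phi(u-j)-\phi(v-j)|^{q}$. As $2^{-k_{0}}\approx t$, it remains to bound $F$ from below by a positive constant on the annulus $\{|u-v|\in[a,2a]\}$. This is the crux. The function $F$ is continuous and, by the $1$-periodicity of $(\phi(\,\cdot\,-j))_{j}$, its values on the annulus are attained on a compact set; moreover $F(u,v)=0$ exactly when the sequences $(\phi(u-j))_{j}$ and $(\phi(v-j))_{j}$ coincide. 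Everything thus reduces to checking that $u\mapsto(\phi(u-j))_{j\in\Z}$ is \emph{injective}, which for the tent is immediate (the pattern of nonzero coordinates locates the unit interval containing $u$, and the two nonzero values recover $u$). Injectivity forces $F>0$ off the diagonal, hence $F\ge c>0$ on the diagonal-free compact annulus, and the lower bound follows with an explicit constant $A_{p,q}>0$.

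I expect the separation estimate $F\ge c>0$ to be the only genuinely nontrivial step; it is the one-dimensional incarnation of the spreading device in Assouad's theorem, the point being that retaining all integer translates at the critical scale prevents the cancellation a single bump could otherwise suffer. If one is uneasy about relying on injectivity of a single $\phi$, the same conclusion can be forced by enlarging the family with finitely many phase-shifted copies $\phi(2^{k}x-j-\theta_{m})$, $m=1,\dots,M$, chosen so that for any $u,v$ with $|u-v|$ in the annulus at least one phase places $u$ and $v$ on opposite sides of a peak. Two remarks worth recording: the argument never invokes the triangle inequality in $\ell_{q}$, so it is valid for all $0<p<q$ (including $q<1$, where $d_{q}$ is only a metric); and although $\sum_{j,k}|\psi_{j,k}(x)|^{q}$ diverges for a fixed point, the \emph{differences} $\sum_{j,k}|\psi_{j,k}(x)-\psi_{j,k}(y)|^{q}$ are finite by the upper bound, which is all that \eqref{longproof} requires (normalizing at a basepoint places the image inside $\ell_{q}$ if desired).
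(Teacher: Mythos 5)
Your proposal is correct, and the construction and the upper bound are essentially the paper's: both use dyadic translates and dilates of a single tent function with amplitude $2^{-kp/q}$ (so that $(2^{-kp/q})^{q}=2^{-kp}$), both observe that at each scale only a bounded number of translates contribute, and both split the sum at the critical scale $2^{-K}\approx|x-y|$, using the Lipschitz bound on coarse scales and the sup bound on fine scales to sum two geometric series. Where you genuinely diverge is the lower bound. The paper extracts a \emph{single} term: it uses a wider tent (supported on $[-2,2]$, so that at scale $K+1$ the support has length $4\cdot2^{-(K+1)}\ge 4|x-y|$), chooses the largest $j=J$ whose support contains both points, and then argues by cases --- either both points lie on one monotone branch of $\psi_{J,K+1}$, so the difference is exactly slope times $|x-y|$, or else one point falls outside the support of the neighboring bump $\psi_{J+1,K+1}$, whose value at the other point is then bounded below. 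This yields the explicit constant $A_{p,q}=2^{-2q}$. You instead keep \emph{all} translates at the critical scale and reduce to showing that $F(u,v)=\sum_{j}|\phi(u-j)-\phi(v-j)|^{q}$ is bounded below on the annulus $|u-v|\in[a,2a]$, which you get from injectivity of the fingerprint map $u\mapsto(\phi(u-j))_{j}$ together with periodicity and compactness. That argument is valid (the tent's fingerprint is indeed injective, $F$ is a locally finite sum of continuous functions, and $F(u+1,v+1)=F(u,v)$ reduces the annulus to a compact set off the diagonal), and it is more robust --- it works for any bump with an injective fingerprint and needs no case analysis or enlarged support --- but it is non-constructive: despite your phrasing, compactness gives you a positive $A_{p,q}$, not an explicit one, unless you actually compute $\min F$ on the annulus. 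Your closing remarks (no triangle inequality needed, hence validity for $q<1$; subtracting $\psi_{j,k}(0)$ to land in $\ell_{q}$) match the paper's Remark 3.3.
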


We will present in Section~\ref{Section4} a nice application of Theorem \ref{ellpsnow} to the embeddability of snowflakings of the spaces $\ell_{p}$, which in certain cases can also be derived from Assouad's theorem (see Remark \ref{remark}). We wrote the proof of Theorem \ref{ellpsnow} with a ``wavelet flavor'' since even if we do not know whether or not there are other connections with approximation theory it seems that the maps obtained or some modification of them might be use to derive interesting embeddings.
\begin{proof}
Let $\psi:\R \to \R_{+}$ be  given by $\ds \psi(t)  = \begin{cases}
                            t+2 &\text{if}\; -2\le t\le 0,\\
                            2-t &\text{if}\; 0\le t\le 2,\\
                            0 & \text{otherwise}.
                            \end{cases}$

\noindent Let $\beta\in\R$ to be chosen later. Define the functions
 $$\psi_{j,k}(t)=2^{k\beta-1}\psi(2^k t-j),\quad \text{for } j,k\in\Z.$$ That is, 

$$
\psi_{j,k}(t)  =\begin{cases}
                            2^{k\beta}2^{k-1}\left(t-\ds\frac{j-2}{2^k}\right)&\text{if}\;\; \ds\frac{j-2}{2^k}\le t<\frac{j}{2^k},\\
                          -2^{k\beta}2^{k-1}\left(t-\ds\frac{j+2}{2^k}\right)&\text{if}\;\; \ds\frac{j}{2^k}\le t<\frac{j+2}{2^k},\\
                            0 & \text{otherwise}.
                          \end{cases}$$
                          Each $\psi_{j,k}$ is supported on the interval $\Big[\frac{j-2}{2^k},\frac{j+2}{2^k}\Big]$, and  fulfills these two estimates:
\begin{equation}\label{FACT1}
\psi_{j,k}(t)\le 2^{k\beta},\, \qquad \forall\, t\in \R,
\end{equation}
and 
\begin{equation}\label{FACT2}
\vert \psi_{j,k}(s)-\psi_{j,k}(t)\vert\le 2^{k-1}2^{k\beta}\vert s-t\vert,\qquad \forall\, s,t\in \R.
\end{equation}
Inequality \eqref{FACT1} follows directly  from the definition of the maps $(\psi_{j,k})_{j,k}$. We remark that $\psi$ is $1$-Lipschitz, and therefore $\psi_{j,k}(t)=2^{k\beta-1} \psi(2^k t -j)$ is $2^{k\beta-1} 2^k $-Lipschitz. This proves \eqref{FACT2}.

\medskip

Now we prove two types of upper-bound estimates using \eqref{FACT1} and \eqref{FACT2}. 

Given $s<t$ pick $K\in\Z$ such that $2^{-(K+1)}\le \vert s-t\vert\le 2^{-K}$.
\medskip

- Upper bound estimate of first type (based on  \eqref{FACT1}):
 \begin{align*}
\vert \psi_{j,k}(s)-\psi_{j,k}(t)\vert^q & \le  (2^{k\beta}+2^{k\beta})^q\\
                                                 & \le   2^q\cdot2^{kq\beta}\\
                                                 & \le   2^q\cdot2^{kq\beta}\cdot2^{(K+1)p}\cdot2^{-(K+1)p}\\
                                                 & \le  2^q\cdot2^{kq\beta}\cdot2^{(K+1)p}\vert s-t\vert^p\\
                                                 & \le  2^{p+q}\cdot2^{kq\beta}\cdot2^{Kp}\vert s-t\vert^p.
 \end{align*}

- Upper bound estimates of second type (based on \eqref{FACT2}):
 \begin{align*}
\vert \psi_{j,k}(s)-\psi_{j,k}(t)\vert^q & \le  2^{q(k-1)}\cdot2^{qk\beta}\vert s-t\vert^q\\
                                                 & \le   2^{q(k-1)}\cdot2^{qk\beta}\vert s-t\vert^{q-p}\vert s-t\vert^p\\
                                                 & \le   2^{q(k-1)}\cdot2^{qk\beta}2^{-K(q-p)}\vert s-t\vert^p\\
                                                 & \le   2^{-q}\cdot2^{kq(1+\beta)}\cdot2^{K(p-q)}\vert s-t\vert^p.
    \end{align*}
Armed with the above inequalities we are ready to substantiate \eqref{longproof}.

\medskip

\noindent {\sc The lower-bound estimate in  \eqref{longproof}}. Note that the support of the function $\psi_{j,K+1}$ is of size exactly $2^{-(K-1)}$. Therefore we can find $j$ such that $s,t\in \supp(\psi_{j,K+1})$. We pick the largest such $j$ that we denote $J$. By our choice of $J$, we force $s$ to belong to $\left[\frac{J-2}{2^{K+1}},\frac{J-1}{2^{K+1}}\right]$ and $t$ to lie in $[\frac{J-1}{2^{K+1}},\frac{J+1}{2^{K+1}}]$. We consider two cases:

\medskip

-  If $\ds t\in\Big[\frac{J-1}{2^{K+1}},\frac{J}{2^{K+1}}\Big]$,
 \begin{align*}
\vert \psi_{j,K+1}(s)-\psi_{j,K+1}(t)\vert^q & = 2^{qK}\cdot2^{q(K+1)\beta}\vert s-t\vert^q\\
                                                  & =  2^{qK}\cdot2^{q(K+1)\beta}\vert s-t\vert^{q-p}\vert s-t\vert^p\\
                                                  & \ge  2^{qK}\cdot2^{q(K+1)\beta}\cdot2^{-(K+1)(q-p)}\vert s-t\vert^p\\
                                                  & \ge  2^{q(\beta-1)+p}\cdot2^{K(q\beta+p)}\vert s-t\vert^p.
                                             \end{align*} 
- If $\ds t\in\Big[\frac{J}{2^{K+1}},\frac{J+1}{2^{K+1}}\Big]$, then $s\notin \supp(\psi_{J+1,K+1})$, hence
  \begin{align*}
\vert \psi_{J+1,K+1}(s)-\psi_{J+1,K+1}(t)\vert^q & =  \vert \psi_{J+1,K+1}(t)\vert^q\\
                                                          & =  2^{qK}\cdot2^{q(K+1)\beta}\left\vert t-\frac{J-1}{2^{K+1}}\right\vert^q\\
                                                          & \ge  2^{qK}\cdot2^{q(K+1)\beta}\cdot2^{-(K+1)q}\\ 
                                                          & \ge  2^{qK}\cdot2^{q(K+1)\beta}\left(\frac{\vert s-t\vert}{2}\right)^q\\
                                                          & \ge  2^{-q}\cdot2^{qK}\cdot2^{q(K+1)\beta}\vert s-t\vert^{q-p}\vert s-t\vert^p\\
                                                          & \ge  2^{-q}\cdot2^{qK}\cdot2^{q(K+1)\beta}\cdot2^{-(K+1)(q-p)}\vert s-t\vert^p\\
                                                          & \ge  2^{q(\beta-2)+p}\cdot2^{K(q\beta+p)}\vert s-t\vert^p.
                                             \end{align*} 
It becomes pretty clear that if we want a Lipschitz lower estimate we are forced to choose $\beta=-\frac{p}{q}$, which we do from now on. 
We remark that $A_{p,q}=2^{-2q}$.

\smallskip
 
\noindent  {\sc The upper-bound estimate in \eqref{longproof}}.
Taking $\beta=-p/q$ the upper estimates of first and second type become

$$\vert \psi_{j,k}(s)-\psi_{j,k}(t)\vert^q \le  2^{p+q}\cdot2^{p(K-k)}\vert s-t\vert^p,$$
and
$$\vert \psi_{j,k}(s)-\psi_{j,k}(t)\vert^q \le 2^{-q}\cdot2^{(q-p)(k-K)}\vert s-t\vert^p.$$                                 
Notice  that for $k$ fixed, $s$ or $t$ belong to the support of $\psi_{j,k}$ for at most 8 values of $j$. All the other contributions in the sum over $j$ are zero. Hence,

\begin{align*}
\sum_{k\in\Z}\sum_{j\in\Z}\vert \psi_{j,k}(s)-\psi_{j,k}(t)\vert^q
&= \sum_{k\le K}\sum_{j\in\Z}\vert \psi_{j,k}(s)-\psi_{j,k}(t)\vert^q+\sum_{k>K}\sum_{j\in\Z}\vert \psi_{j,k}(s)-\psi_{j,k}(t)\vert^q\\
                                                 & \le   8\left( \sum_{k\le K}2^{-q}\cdot2^{(q-p)(k-K)}+ \sum_{k>K}2^{p+q}\cdot2^{p(K-k)}\right)\vert s-t\vert^p\\
                                                 & \le   8\left(2^{-q}\sum_{N\ge 0}2^{-(q-p)N}+2^{p+q}\sum_{N>0}2^{-Np}\right)\vert s-t\vert^p\\
                                                 & \le   8\left(\ds2^{-q}\frac{2^{q-p}}{2^{q-p}-1}+2^{p+q}\ds\frac{2^{p}}{2^{p}-1}\right)\vert s-t\vert^p.
 \end{align*}
We thus get $B_{p,q}=\ds8\left(\frac{1}{2^q-2^p}+\ds\frac{2^{p+q}}{2^p-1}\right)$, and the proof is over.
\end{proof}

\begin{remark} An immediate consequence of Theorem \ref{ellpsnow} is that for $0<p<q\le1$, $\redp\lip\mellq$. The embedding  $x\mapsto (\psi_{\varphi(n)}(x)-\psi_{\varphi(n)}(0))_{n\in\N}$, where  $\varphi$ is any enumeration of $\Z\times\Z$, does the job.

\medskip
\end{remark} 

\section{Lipschitz embeddings between   $L_p$-spaces and $\ell_{p}$-spaces for $0<p<\infty$}\label{Section5}

\noindent A Lipschitz function from a separable Banach space $X$ to a Banach space $Y$ with the Radon-Nikod\'{y}m property, (RNP) for short,  is G\^ateaux differentiable at least at one point. This  important theorem, proved independently by Aronszajn, Christensen, and Mankiewicz \cite{Aronszajn1976, Christensen1973, Mankiewicz1973},  in combination with the simple fact that if a Lipschitz embedding between Banach spaces is differentiable at some point then its derivative at this point is a linear into isomorphism, proves the impossibility of certain Lipschitz embeddings. Thus, for Banach spaces the linear theory (cf.\ \cite{AlbiacKalton2006}) yields:\newline

\noindent (i) If $1\le p,q<\infty$ with $p\not=q$, then $\ell_{p}\notlip \ell_{q}$.

\noindent (ii) Unless $p=q=2$, $L_{q}\notlip \ell_{p}$.

\noindent (iii) If $1\le q<\infty$ then $L_{p}\notlip L_{q}$ unless $1\le q\le p\le 2$ or $p=q$.

\noindent Note that the case of $L_{1}$ as a potential target space of a Lipschitz embedding is special because $L_{1}$ does not have  (RNP). Nevertheless, it still holds that $L_{1}$ does not contain any subset Lipschitz equivalent to $L_{p}$ for $p>2$  because of a cotype obstruction: if $L_{p}\lip L_{1}$ then $L_{p}$ would have been isomorphic to a subspace of $L_{1}^{\ast\ast}$, which has  cotype 2 versus the cotype of $L_{p}$ for $p>2$ that is only $p$.

\medskip

For future reference and the convenience of the reader we summarize  some  facts that follow immediately from the above and that will be used repeatedly from now on.

\begin{proposition}\label{differentiationargument} Let $\mathcal M$ be a metric space.

\begin{enumerate} 
\item[(i)] If  $\mathcal M$ contains a Lipschitz copy of $L_1$ then $\mathcal M$ cannot be Lipschitz embeddable into a Banach space with (RNP).

\item[(ii)] If  $\mathcal M$ contains a Lipschitz copy of a Banach space $X$ and $\mathcal M$ admits a Lipschitz embedding into a Banach space $Y$ with (RNP), then $X$ embeds isomorphically into $Y$.
\end{enumerate}
\end{proposition}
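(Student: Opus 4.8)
The plan is to prove part (ii) first and then deduce part (i) as an immediate consequence. For (ii), let $g\colon X\to\mathcal M$ be a Lipschitz embedding witnessing the Lipschitz copy of $X$ inside $\mathcal M$, and let $h\colon\mathcal M\to Y$ be the assumed Lipschitz embedding into the Banach space $Y$ with (RNP). The first step is the routine observation that the composition $F=h\circ g\colon X\to Y$ is again a Lipschitz embedding between Banach spaces: multiplying together the two-sided Lipschitz inequalities for $g$ and $h$ produces constants $A,B>0$ with $A^{-1}\Vert x-y\Vert_X\le\Vert F(x)-F(y)\Vert_Y\le B\Vert x-y\Vert_X$ for all $x,y\in X$.

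Next I would invoke the Aronszajn--Christensen--Mankiewicz theorem quoted above. Since $X$ is separable (as it is in every application we have in mind, e.g. $X=\ell_p$ or $X=L_p$) and $Y$ has (RNP), the Lipschitz map $F$ is G\^ateaux differentiable at some point $x_0\in X$, with bounded linear derivative $T=DF(x_0)\colon X\to Y$. The heart of the matter is then the elementary ``simple fact'' from the text, which I would verify on the spot: for every direction $v\in X$ we have $T(v)=\lim_{t\to0}\big(F(x_0+tv)-F(x_0)\big)/t$, so dividing the two-sided estimate for $F$ by $\vert t\vert$ and letting $t\to0$ yields $A^{-1}\Vert v\Vert_X\le\Vert T(v)\Vert_Y\le B\Vert v\Vert_X$. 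Hence $T$ is simultaneously bounded and bounded below, i.e. a linear into-isomorphism, so $X$ embeds isomorphically into $Y$, finishing (ii).

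For (i) I argue by contradiction. Suppose $\mathcal M$ contains a Lipschitz copy of $L_1$ and at the same time Lipschitz embeds into a Banach space $Y$ with (RNP). Applying part (ii) with $X=L_1$ (which is separable) would furnish a linear isomorphic embedding of $L_1$ into $Y$. But (RNP) is an isomorphic invariant that is inherited by closed subspaces, so $L_1$ would inherit (RNP) from $Y$, contradicting the classical fact that $L_1$ fails (RNP). Therefore no such $Y$ can exist, which is exactly the assertion of (i).

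The composition estimate and the passage to the limit in the derivative are entirely mechanical, so there is no deep obstacle here; the only point that genuinely demands care is checking that the hypotheses of the differentiation theorem are in force, namely the separability of the domain $X$ (harmless, since all spaces in play are separable) and the (RNP) of the target $Y$. Thus the proposition is an immediate distillation of the differentiation theorem, the observation that a differentiable Lipschitz embedding has an into-isomorphic derivative, and the heredity together with the failure of (RNP) for $L_1$.
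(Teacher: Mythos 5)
Your proof is correct and follows exactly the route the paper intends: the proposition is stated there without proof as an immediate consequence of the Aronszajn--Christensen--Mankiewicz differentiation theorem together with the ``simple fact'' that the G\^ateaux derivative of a Lipschitz embedding is a linear into-isomorphism, which is precisely what you spell out (including the deduction of (i) from (ii) via the heredity of (RNP) to closed subspaces and its failure for $L_1$). The only point worth flagging is that the differentiation theorem requires the domain $X$ to be separable, a hypothesis the proposition leaves implicit and which you correctly observe is satisfied in every application made of it.
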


Eventually, throughout this section it  will also be  helpful to be aware of the following recent embedding results that can be found in \cite{Albiac2008}. 

\begin{theorem}\label{embeddings} Let $0<p<q\le 1$. Then: 
\begin{enumerate}
\item[(i)] $\mellp\lip \mellq$.
\item[(ii)] $\mellq\notlip \mellp$.
\item[(iii)] $\mellp\isometric \mclp$.\\
\item[(iv)] $\mclp\isometric (L_1,d_1)=L_1$ and $L_1\isometric\mclp$.
\item[(v)] $\mclp\isometric\mclq$ when $0<p,q\le 1$.
\end{enumerate}
\end{theorem}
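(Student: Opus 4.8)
The plan is to address the five assertions in increasing order of difficulty, reducing each to material already developed. Statement (iii) is elementary: fix pairwise disjoint measurable $A_n\subset[0,1]$ with $|A_n|>0$ and $\sum_n|A_n|\le 1$, and send $x=(x_n)\in\ell_p$ to $\sum_n|A_n|^{-1/p}x_n\mathbf 1_{A_n}$. Disjointness gives $\int|\,\cdot\,|^p=\sum_n|A_n|^{-1}|A_n|\,|x_n-y_n|^p=\sum_n|x_n-y_n|^p$, so this is an isometry of $\mellp$ into $\mclp$. Statement (ii) reduces to Proposition~\ref{nolabel}: the first coordinate embeds $\redq$ isometrically into $\mellq$, so a Lipschitz embedding $\mellq\lip\mellp$ would restrict to a nonconstant Lipschitz map $\redq\to\mellp$, contradicting that proposition. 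For (i) I would apply Theorem~\ref{ellpsnow} coordinatewise: with the maps $(\psi_{j,k})$ there, send $x=(x_n)$ to the vector $\Phi(x)$ with entries $\psi_{j,k}(x_n)-\psi_{j,k}(0)$ indexed by $(n,j,k)$. Summing \eqref{longproof} over $n$ yields $A_{p,q}\,d_p(x,y)\le d_q(\Phi(x),\Phi(y))\le B_{p,q}\,d_p(x,y)$, while the upper bound taken at $y=0$ shows $\Phi(x)\in\ell_q$; hence $\mellp\lip\mellq$.

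The remaining weight falls on (iv), from which (v) follows at once by composing the two isometric embeddings of (iv) through $L_1$, giving $\mclp\isometric L_1\isometric\mclq$ for all $0<p,q\le1$. So it suffices to prove the two halves $\mclp\isometric L_1$ and $L_1\isometric\mclp$ (all auxiliary $L_1$- and $L_p$-spaces that appear are separable and hence isometric to the standard ones).

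For $L_1\isometric\mclp$ I would use a layer-cake (cut) representation. The model case is that the half-line indicators embed the ordinary line: $s\mapsto\mathbf 1_{[0,s]}$ sends $(\R,|\cdot|)$ isometrically into $(L_p(\R),d_p)$, since $\int_\R|\mathbf 1_{[0,s]}-\mathbf 1_{[0,t]}|^p=|s-t|$. For general $f,g\in L_1$ one writes $\|f-g\|_{L_1}=\int_0^1\!\!\int_\R|\mathbf 1_{\{f(t)>u\}}-\mathbf 1_{\{g(t)>u\}}|\,du\,dt$, and since the integrand is $\{0,1\}$-valued it equals its own $p$-th power. Thus $\Phi(f)(t,u)=\mathbf 1_{\{f(t)>u\}}-\mathbf 1_{\{u<0\}}$, where the reference term $\mathbf 1_{\{u<0\}}$ is subtracted so that $\Phi(f)\in L_p([0,1]\times\R)$ with $\int|\Phi(f)|^p=\|f\|_{L_1}$, satisfies $d_p(\Phi(f),\Phi(g))=\|f-g\|_{L_1}$, as desired.

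The hard direction, and the expected main obstacle, is $\mclp\isometric L_1$, where I would exploit the negative-type structure of the relevant kernels. Since $0<p\le1$ forces $2p\le2$, the kernel $(s,t)\mapsto|s-t|^{2p}$ is of negative type on $\R$, so by Schoenberg's theorem there is a centered Gaussian process $\{G_s\}_{s\in\R}$ with $\E(G_s-G_t)^2=\tfrac{\pi}{2}|s-t|^{2p}$; then $\E|G_s-G_t|=|s-t|^p$, realizing $(\R,|\cdot|^p)=\redp$ isometrically inside $L_1(\Omega)$. Taking independent copies $\{G^{(t)}\}_{t\in[0,1]}$ and setting $\Psi(f)(\omega,t)=G^{(t)}_{f(t)}(\omega)$ gives $\|\Psi(f)-\Psi(g)\|_{L_1(\Omega\times[0,1])}=\int_0^1\E|G^{(t)}_{f(t)}-G^{(t)}_{g(t)}|\,dt=\int_0^1|f(t)-g(t)|^p\,dt=d_p(f,g)$. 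The points that will require genuine care are the measurability and separability of the Gaussian field indexed by the continuum $\R\times[0,1]$ and the verification that $\Psi(f)$ is integrable; at this step a direct symmetric-stable/L\'evy representation, in the spirit of Mendel--Naor \cite{MendelNaor2004}, may be a cleaner substitute for the Gaussian subordination sketched here.
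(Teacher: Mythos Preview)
The paper does not prove Theorem~\ref{embeddings} at all: it is quoted verbatim from \cite{Albiac2008} as background. So there is no ``paper's own proof'' to compare against; instead I compare your arguments to the tools the present paper does develop.

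Your proofs of (iii) and (v) are standard and correct. Your proof of (i) is precisely the argument the paper gives later as Proposition~\ref{sellp}(i), and your proof of (ii) is a clean reduction to the paper's own Proposition~\ref{nolabel}; both are fine (and in fact the paper's internal machinery suffices to recover these two parts of the cited theorem, as you show).

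For (iv), the layer-cake map for $L_1\isometric\mclp$ is correct as written. In the other direction $\mclp\isometric L_1$ your idea is right but you introduce an unnecessary complication that is the sole source of the measurability worry you flag: you do \emph{not} need independent copies $\{G^{(t)}\}_{t\in[0,1]}$. A single fractional Brownian motion $(G_s)_{s\in\R}$ with $\E|G_s-G_t|=|s-t|^{p}$ (Hurst index $p\in(0,1)$) already does the job via
\[
\Psi(f)(t,\omega)=G_{f(t)}(\omega),
\]
since by Fubini
\[
\|\Psi(f)-\Psi(g)\|_{L_1([0,1]\times\Omega)}=\int_0^1 \E\bigl|G_{f(t)}-G_{g(t)}\bigr|\,dt=\int_0^1 |f(t)-g(t)|^{p}\,dt=d_p(f,g).
\]
Because $G$ has a continuous (hence jointly measurable) version and $G_0=0$, the map $\Psi$ is well defined, measurable, and lands in $L_1$ with $\|\Psi(f)\|_{L_1}=d_p(f,0)<\infty$. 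This removes the uncountable-product issue entirely; the Mendel--Naor formula you mention is an equally valid explicit alternative, but it is not needed to rescue the argument.
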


Here and subsequently, if $X$ and $Y$ are Banach spaces, $X\isom Y$  will denote the existence of a (linear) isomorphic embedding from $X$ into $Y$, and $X\lisometric Y$ will stand for a  linear isometric embedding. We will write $X\equiv Y$ if there exists a linear onto isometry between them,  and $X\approx Y$ if they are linearly isomorphic.

\subsection{Lipschitz nonembeddability of $L_p$ into $\ell_q$}\

\medskip

\noindent The first question we tackle is the embeddability of $L_p$ into $\ell_q$ for $0<p,q<\infty$. The outcome is crystal clear  since no embedding is possible.

\begin{proposition}
Let $0<p,q<\infty$, $p,q\neq 2$. Then, endowed with their ad-hoc metrics, $L_p\notlip\ell_q.$
\end{proposition}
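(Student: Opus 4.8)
The plan is to reduce every case to one already settled by the differentiation technique behind Proposition~\ref{differentiationargument} together with the linear nonembeddability facts (i)--(iii) recalled above; the point is that those tools demand that the \emph{target} be a Banach space with (RNP). I would therefore split into four cases according to whether $p\ge 1$ or $0<p<1$, and whether $q\ge 1$ or $0<q<1$, and in each case arrange for the target to be (or to Lipschitz embed into) a Banach space with (RNP).

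First suppose $p\ge 1$, so $L_p$ is a Banach space. If $q\ge 1$ as well, then since $q\neq 2$ the linear fact that $L_a\notlip\ell_b$ unless $a=b=2$ yields $L_p\notlip\ell_q$ at once. If instead $0<q<1$, the target $\mellq$ is no longer locally convex and that fact does not apply directly; the key step is to first push the target into a genuine Banach space. By Theorem~\ref{embeddings}(i) (applied to the indices $q<1$) one has $\mellq\lip(\ell_1,d_1)=\ell_1$, so composing a hypothetical $L_p\lip\mellq$ with this gives $L_p\lip\ell_1$, which the same linear fact forbids (the target index $1\neq 2$). This maneuver works precisely because $\ell_1$, being a separable dual, has (RNP); by contrast the ``obvious'' isometric reduction $\mellq\isometric L_1$ coming from Theorem~\ref{embeddings}(iii)--(iv) would land in $L_1$, which has no (RNP) and into which $L_p$ does embed when $p\le 2$, so that route is useless.

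Next suppose $0<p<1$. Now $\mclp$ is not a Banach space, but by Theorem~\ref{embeddings}(iv) it contains a Lipschitz copy of $L_1$. If $q\ge 1$ then $\ell_q$ is a Banach space with (RNP), so Proposition~\ref{differentiationargument}(i) immediately rules out $\mclp\lip\ell_q$. If $0<q<1$, I again invoke Theorem~\ref{embeddings}(i) to get $\mellq\lip\ell_1$, so $\mclp\lip\mellq$ would give $\mclp\lip\ell_1$; since $\ell_1$ has (RNP) while $\mclp$ carries a Lipschitz copy of $L_1$, Proposition~\ref{differentiationargument}(i) gives the contradiction once more.

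The one delicate point — the main obstacle — is the non-locally-convex target $\mellq$ for $0<q<1$. The Lipschitz invariants available here do not separate $L_p$ from $\mellq$ (the Hausdorff dimension is infinite on both sides, and Enflo type, being monotone the wrong way under Lipschitz embeddings, is powerless since $\mellq$ is Enflo-type poor), and, as noted, the isometric reduction to $L_1$ is wasted for lack of (RNP). Recognising that Theorem~\ref{embeddings}(i) instead routes $\mellq$ through $\ell_1$, which does carry (RNP), is exactly what unlocks the argument. I would finally remark that the hypothesis $p,q\neq 2$ is used only to exclude the diagonal pair $p=q=2$, where $L_2\equiv\ell_2$ and the statement genuinely fails.
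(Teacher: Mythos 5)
Your proof is correct and follows essentially the same route as the paper's: the cases with $p,q\ge 1$ are dispatched by the linear theory via differentiation, the case $0<p<1$ uses the isometric copy of $L_1$ inside $\mclp$ against the (RNP) of $\ell_q$ (or of $\ell_1$ after routing $\mellq$ into $\ell_1$), and the case $0<q<1$ uses the embedding $\mellq\lip\ell_1$ from Theorem~\ref{embeddings}(i) to land in the (RNP) space $\ell_1$. Your added commentary on why one must route the target through $\ell_1$ rather than $L_1$ is a sound observation but not part of the paper's argument.
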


\begin{proof} In view of the introductory background of the section, there remain  two possible  scenarios to examine.

(a) The mixte regime, i.e., $0<p<1\le q$. From Theorem~\ref{embeddings}, $L_1\isometric\mclp$, hence if $\mclp\lip\ell_q$ then we would have $L_1\lip\ell_q$, which is in contradiction with Proposition~\ref{differentiationargument} (i).  If $0<q<1\le p$ it suffices to know that $\mellq\lip\ell_1$, hence if $L_p\lip\mellq$ then $L_p\lip\ell_1$, a contradiction.
\smallskip

(b) The $\mathsf F$-space regime,  i.e., $0<p,q<1$. Since $L_1\isometric\mclp$, if $\mclp\lip\mellq$ then $L_1\lip\ell_1$, which is impossible as we know from linear theory.
\end{proof}

\subsection{Lipschitz embeddability of $\ell_p$ into $\ell_q$}\

\medskip

\noindent The following proposition tells us that when trying to Lipschitz embed $\ell_p$ into $\ell_q$,  this is only possible in the $\mathsf F$-space regime under some restriction on the values of $p$ and $q$.
\begin{proposition}\label{Proplpintolq}\ 
\begin{enumerate}
\item[(i)] If $1\le p,q<\infty$, then $\ell_p\notlip\ell_q$.\\
\item[(ii)] If $0<p<1<q$, then $\ell_q\notlip\mellp$ and $\mellp\notlip\ell_q$.\\
\item[(iii)] If $0<p<q\le 1$, then $\mellp\lip\mellq$ but $\mellq\notlip\mellp$.
\end{enumerate}
\end{proposition}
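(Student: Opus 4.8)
The plan is to prove each of the three items by reducing to results already established in the excerpt, combining the $\mathsf{F}$-space transfer principles of Theorem~\ref{embeddings} with the differentiation/cotype obstructions collected in Proposition~\ref{differentiationargument} and the snowflaking invariants of Section~\ref{Section2}.

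For item (i), the range $1\le p,q<\infty$ is purely linear: the fact (i) recorded just before Proposition~\ref{differentiationargument}, namely $\ell_p\notlip\ell_q$ for $p\ne q$ in $[1,\infty)$, handles $p\ne q$ directly via the Aronszajn--Christensen--Mankiewicz differentiation argument, and the case $p=q$ is vacuous. So item (i) is immediate from the introductory background and requires no new work.

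For item (ii), with $0<p<1<q$, I would argue in two directions. First, $\ell_q\notlip\mellp$: since $q>1$, the space $\ell_q$ contains a Lipschitz (indeed isometric) copy of a line segment and more to the point is a Banach space of cotype $\max\{q,2\}$, while $\mellp$ for $0<p<1$ can be handled through Theorem~\ref{embeddings}(i), which gives $\mellp\lip\ell_1$ for $p<1$; composing, if $\ell_q\lip\mellp$ then $\ell_q\lip\ell_1$, contradicting the linear theory (case (i) before Proposition~\ref{differentiationargument}, since $1\ne q$). Second, $\mellp\notlip\ell_q$: here $\ell_q$ is a Banach space with (RNP) when $q>1$, so by Proposition~\ref{differentiationargument}(i) it suffices to exhibit a Lipschitz copy of $L_1$ (or of some non-RNP-embeddable space) inside $\mellp$. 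By Theorem~\ref{embeddings}(iii)--(iv), $\mellp\isometric\mclp$ and $L_1\isometric\mclp$, so $L_1\isometric\mellp$; thus a Lipschitz embedding $\mellp\lip\ell_q$ would yield $L_1\lip\ell_q$, which is impossible by Proposition~\ref{differentiationargument}(i) since $\ell_q$ has (RNP). This closes item (ii).

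For item (iii), the positive direction $\mellp\lip\mellq$ for $0<p<q\le1$ is exactly Theorem~\ref{embeddings}(i), so nothing new is needed there. The negative direction $\mellq\notlip\mellp$ is Theorem~\ref{embeddings}(ii), which we may invoke directly. The main obstacle, and the only place where genuine care is required, is making sure each reduction lands inside the stated hypotheses of Theorem~\ref{embeddings} and Proposition~\ref{differentiationargument}; in particular one must check that the composition of Lipschitz embeddings is again a Lipschitz embedding (clear, since the constants multiply) and that the (RNP) hypothesis is correctly applied only to the $q\ge 1$ target spaces and never to the $\mathsf F$-spaces. Once these bookkeeping points are verified, the proposition follows entirely by assembling the cited results with no further computation.
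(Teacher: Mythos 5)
Items (i) and (iii), and the first half of item (ii), are fine and essentially match the paper: (i) is exactly the linear/differentiation background recalled before Proposition~\ref{differentiationargument}, (iii) is Theorem~\ref{embeddings}(i)--(ii) quoted directly, and $\ell_q\notlip\mellp$ is obtained, just as in the paper, by composing a putative embedding with $\mellp\lip\ell_1$ to contradict $\ell_q\notlip\ell_1$.

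The second half of (ii), namely $\mellp\notlip\ell_q$, has a genuine gap. You assert that from $\mellp\isometric\mclp$ and $L_1\isometric\mclp$ one may conclude $L_1\isometric\mellp$. This is a non-sequitur: both cited arrows point \emph{into} $\mclp$, so they cannot be composed to land in $\ell_p$; you would need an embedding of $\mclp$ into $\mellp$, which is not available. Worse, the intermediate claim is actually false: the paper's proposition on nonembeddability of $L_p$ into $\ell_q$ already gives $L_1\notlip\mellp$ for $0<p<1$ (one can also see this from Proposition~\ref{nolabel}, since a Lipschitz embedding of $L_1$ into $\mellp$ would restrict along a line segment to a nonconstant Lipschitz map from $(\R,|\cdot|)$ into $\mellp$). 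The paper's actual argument is different and is the idea your proposal is missing: assuming $\mellp\lip\ell_q$, pass to ultraproducts to get $(\mellp)_{\mathcal U}\lip L_q(\nu)$, and invoke the fact from Naor's master thesis that the ultraproduct $(\mellp)_{\mathcal U}$ --- though not $\ell_p$ itself --- contains a Lipschitz copy of $L_1$; then Proposition~\ref{differentiationargument}(i) applied to the (RNP) space $L_q(\nu)$, $q>1$, yields the contradiction.
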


\begin{proof} Only   (ii)  requires a proof.  Note that $\mellp\lip\ell_1$, hence if $\ell_q\lip\mellp$ then we would have $\ell_q\lip\ell_1$, a contradiction. For the other statement,
 if $\mellp\lip\ell_q$ then passing to ultraproducts we have that $(\mellp)_{\mathcal{U}}\lip L_q(\nu)$.
But it follows from Naor's master thesis \cite{Naorthesis} that $(\mellp)_{\mathcal{U}}$ contains a Lipschitz copy of $L_1$, therefore so does  $L_q(\nu)$. Contradiction since $q>1$.
\end{proof}

\subsection{Lipschitz embeddability of $\ell_p$\ into $L_q$}\label{Section4.3}\

\medskip

\noindent This is the state of affaires. 

\begin{proposition}\label{FlorentProposition5.4}\
\begin{enumerate}
\item[(i)] Suppose $1\le p,q<\infty$. 
\begin{itemize}
                                              \item[(a)] If $2<p\neq q$, then $\ell_p\notlip L_q$;
                                              \item[(b)] If $1\le p<q\le 2$, then $\ell_q\lisometric L_p$ but $\ell_p\notlip L_q$;
                                              \item[(c)] If $1\le p<2<q$, then $\ell_p\notlip L_q$ and $\ell_q\notlip L_p$.
\end{itemize}                                             
\item[(ii)] Suppose $0<p<1\le q$. Then, $\mellp\notlip L_q$ and \begin{itemize}
                                          \item[(a)] if $0<p<1\le q\le 2$, $\ell_q\isometric\mclp$;
                                          \item[(b)] if $0<p<1<2<q$, $\ell_q\notlip\mclp$.
                                          \end{itemize}
\item[(iii)] If $0<p,q\le 1$, then $\mellp\isometric\mclq$.
\end{enumerate}
\end{proposition}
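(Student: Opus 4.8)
The plan is to prove the three items by the same division of labor that governs the whole section: the \emph{positive} embeddings are assembled from the isometric identifications of Theorem~\ref{embeddings} together with the classical representation of $\ell_q$ ($1\le q\le 2$) by $q$-stable random variables, while the \emph{negative} statements are blocked either by the Enflo-type obstruction of Lemma~\ref{enflolemma}(i) or by the differentiation/RNP and cotype mechanism of Proposition~\ref{differentiationargument} fed with the linear classification of which $\ell_r$ embed isomorphically into $L_s$. Throughout I will use the dichotomy that $\ell_r\isom L_s$ holds exactly when $r=2$, or $s\le r\le 2$, or ($s>2$ and $r=s$).

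I would dispose of (iii) first, since it is immediate: chaining the isometric embeddings $\mellp\isometric\mclp$ of Theorem~\ref{embeddings}(iii) and $\mclp\isometric\mclq$ of Theorem~\ref{embeddings}(v) gives $\mellp\isometric\mclq$. For the negative half of (ii), namely $\mellp\notlip L_q$, I would read the obstruction off Enflo type: for every $n$ and every $t>0$ the $2^n$ points with first $n$ coordinates in $\{0,t\}$ and all later coordinates $0$ form a Hamming cube sitting isometrically in $\mellp$, and a direct count on these cubes (diagonals of $d_p$-length $nt^p$ against edges of length $t^p$) gives $\textrm{E-type}(\mellp)=1$. Since $L_q$ has Enflo type $\min(q,2)>1$ for $q>1$, Lemma~\ref{enflolemma}(i) rules out $\mellp\lip L_q$. (The borderline $q=1$ is precisely the case where $\mellp\lip\ell_1\isometric L_1$ does hold, consistently with (iii) at $q=1$.)

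The positive embeddings (i)(b) and (ii)(a) I would obtain from stable variables. For $1\le p<q\le 2$, the closed linear span in $L_p$ of a sequence of independent symmetric $q$-stable random variables is linearly isometric to $\ell_q$, which gives $\ell_q\lisometric L_p$. The same construction places $\ell_q$ isometrically into $L_1$ for $1\le q\le 2$ (trivially for $q=1$, and via $q$-stable variables, which lie in $L_1$ because $q>1$, for $1<q\le 2$); composing with $L_1\isometric\mclp$ from Theorem~\ref{embeddings}(iv) yields $\ell_q\isometric\mclp$, which is (ii)(a).

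The remaining statements all read $\ell_r\notlip L_s$, and I would treat them uniformly. When $s>1$ the target has the RNP, so by Proposition~\ref{differentiationargument}(ii) a Lipschitz embedding would force a linear isomorphic embedding $\ell_r\isom L_s$; feeding the configurations of (i)(a), (i)(b), (i)(c) and the $s>1$ instances into the dichotomy above shows this fails in each case. The cases escaping the RNP argument have $L_1$ as the effective target (the $p=1$ subcase of (i)(c), and $\ell_q\notlip\mclp$ in (ii)(b) after identifying $\mclp\isometric L_1$ via Theorem~\ref{embeddings}(iv)); there I would substitute the cotype obstruction already used in the section, namely that $\ell_r$ with $r>2$ has cotype $r$ while a Lipschitz copy of it in $L_1$ would embed isomorphically into $L_1^{\ast\ast}$, which has cotype $2$. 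The main obstacle is the positive direction: certifying that the stable-variable span produces an \emph{isometric} (not merely isomorphic) copy in the correct range and that this copy composes cleanly with the isometric identifications of Theorem~\ref{embeddings}; the negative parts are comparatively mechanical once the classical $\ell_r\hookrightarrow L_s$ dichotomy and the cotype of $L_1^{\ast\ast}$ are in hand, the one subtlety being to route the $L_1$-target cases through cotype rather than differentiation because $L_1$ lacks the RNP.
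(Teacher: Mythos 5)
Your proposal is correct, and its overall architecture coincides with the paper's: (iii) by chaining the isometric embeddings of Theorem~\ref{embeddings}, the positive halves of (i)(b) and (ii)(a) via $q$-stable random variables composed with $L_1\isometric\mclp$, and the negative claims in (i) by differentiation against the classical list of which $\ell_r$ embed isomorphically into $L_s$, with a cotype obstruction taking over when the target is $L_1$ and (RNP) fails. You diverge from the paper in two sub-arguments, both validly. First, for $\mellp\notlip L_q$ with $0<p<1<q$ the paper passes to an ultrapower of $\mellp$, which by a result from Naor's thesis contains a Lipschitz copy of $L_1$, and then applies Proposition~\ref{differentiationargument}; you instead observe that the Hamming cubes sitting isometrically inside $\mellp$ force $\textrm{E-type}(\mellp)=1$, while $\textrm{E-type}(L_q)=\min(q,2)>1$, and conclude by Lemma~\ref{enflolemma}(i). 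Your route avoids ultraproducts and the unpublished thesis result, at the price of invoking the Naor--Schechtman computation of the Enflo type of $L_q$ --- a fact the paper relies on anyway in Theorem~\ref{snowtarget}, so nothing new is imported. Second, for (ii)(b) (and the $L_1$-target cases of (i)) the paper composes $\mclp\isometric L_1\coarse\ell_2$ and kills the resulting coarse embedding of $\ell_q$, $q>2$, into Hilbert space via metric cotype or Johnson--Randrianarivony; you instead use the weak$^*$-differentiation principle placing a Lipschitz copy of $\ell_q$ isomorphically inside $L_1^{**}$, which has cotype $2$ --- precisely the mechanism the paper itself states in the preamble of Section~\ref{Section5} for $L_p\notlip L_1$, so your version is arguably the more economical one within the section's own toolbox. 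The only cosmetic gap is that your list of ``$L_1$-as-effective-target'' cases should also absorb (i)(a) when $q=1$ (if that case is intended by the statement), but the cotype mechanism you describe covers it verbatim.
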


\begin{proof} (i) follows from the reminders we made at the beginning of this section between the Lipschitz and the linear structure of $L_{p}$ for $p\ge 1$.

 For $0<p<1$, the fact that $\mellp\notlip L_q$ when $1\le q$ can be proved using an ultraproduct argument like in the proof of Proposition~\ref{Proplpintolq} (ii). To see (ii) (a), it suffices to recall that $\ell_q\lisometric L_q \lisometric L_1$ for the range $1<q\le 2$ and use the embedding $L_1\isometric\mclp$ of Theorem~\ref{embeddings} (iv). For (ii) (b) notice that $\mclp\isometric L_1\coarse\ell_2$, hence if $\ell_q\lip\mclp$ then $\ell_q\coarse\ell_2$. But this is impossible using a metric cotype argument \cite{MendelNaor2008} or the result of Johnson and Randrianarivony that appeared in \cite{JohnsonRandrianarivony2006} .

 Part (iii)  follows from the  diagram $\mellp\isometric\mclp\isometric\mclq.$
\end{proof}

\subsection{Lipschitz embeddability of  $L_p$ into $L_q$}\

\medskip

\noindent The situation is the exact same one as in \textsection\ref{Section4.3}. The proof  of the following proposition goes along the same lines as  the proof of the Proposition~\ref{FlorentProposition5.4}, and so we omit the details.

\begin{proposition}\label{usedtobeProp5.5}\ \\
\begin{enumerate}
\item[(i)] Suppose $1\le p,q<\infty$. Then, \begin{itemize}
                                              \item[(a)] if $2<p\neq q$, $L_p\notlip L_q$;
                                              \item[(b)] if $1\le p<q\le 2$, $L_q\lisometric L_p$ but $L_p\notlip L_q$;
                                              \item[(c)] if $1\le p<2<q$, $L_q\notlip L_p$ and $L_p\notlip L_q$.
                                              \end{itemize}
\item[(ii)] Suppose $0<p<1<q$. Then, $\mclp\notlip L_q$ and \begin{itemize}
                                          \item if $0<p<1<q\le 2$, $L_q\isometric\mclp$;
                                          \item if $0<p<1<2<q$, $L_q\notlip\mclp$.
                                          \end{itemize}
\item[(iii)] If $0<p,q\le 1$, then $\mclp\isometric\mclq$.
\end{enumerate}
\end{proposition}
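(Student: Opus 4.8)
The plan is to transcribe the proof of Proposition~\ref{FlorentProposition5.4} almost verbatim, replacing each fact about the sequence spaces by its function-space analogue. The skeleton is identical: part~(i) is pure linear theory, part~(ii) combines the differentiation/(RNP) machinery of Proposition~\ref{differentiationargument} with the isometric dictionary of Theorem~\ref{embeddings}(iv), and part~(iii) is a one-line consequence of Theorem~\ref{embeddings}(v).

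For part~(i), with $1\le p,q<\infty$, I would deduce everything from reminder~(iii) recorded at the head of this section, namely that for $1\le q<\infty$ one has $L_p\notlip L_q$ unless $1\le q\le p\le 2$ or $p=q$. When $2<p\neq q$ the clause $1\le q\le p\le 2$ cannot hold (since $p>2$) and $p\neq q$, so $L_p\notlip L_q$, giving (a). When $1\le p<q\le 2$ the inequality $q\le p$ fails, whence $L_p\notlip L_q$; the positive half $L_q\lisometric L_p$ is the classical linear isometric embedding of $L_q$ into $L_p$, valid for $1\le p<q\le 2$, giving (b). When $1\le p<2<q$, applying reminder~(iii) in both directions yields $L_p\notlip L_q$ (here $q>p$) and $L_q\notlip L_p$ (here $q\not\le 2$), giving (c).

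For part~(ii), fix $0<p<1<q$. By Theorem~\ref{embeddings}(iv) we have $L_1\isometric\mclp$, so $\mclp$ contains a Lipschitz copy of $L_1$; since $L_q$ is reflexive for $q>1$ and therefore has the (RNP), Proposition~\ref{differentiationargument}(i) immediately gives $\mclp\notlip L_q$. This is actually cleaner than the ultraproduct detour used in Proposition~\ref{FlorentProposition5.4}(ii) for $\mellp$, precisely because $\mclp$ already carries a copy of $L_1$ on the nose. For the subcase $0<p<1<q\le 2$, composing $L_q\lisometric L_1$ (valid for $1\le q\le 2$) with $L_1\isometric\mclp$ from Theorem~\ref{embeddings}(iv) produces the asserted $L_q\isometric\mclp$. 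For the subcase $0<p<1<2<q$, I would argue exactly as in Proposition~\ref{FlorentProposition5.4}(ii): since $\mclp\isometric L_1$ and $L_1\coarse\ell_2$, a hypothetical $L_q\lip\mclp$ would force $L_q\coarse\ell_2$, which is ruled out for $q>2$ by the metric-cotype obstruction of Mendel and Naor \cite{MendelNaor2008} (alternatively by \cite{JohnsonRandrianarivony2006}).

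Finally, part~(iii) is immediate from Theorem~\ref{embeddings}(v), which already records $\mclp\isometric\mclq$ for $0<p,q\le 1$. The only genuinely deep ingredient in the whole argument is the coarse non-embeddability $L_q\notcoarse\ell_2$ for $q>2$ powering the subcase $0<p<1<2<q$; everything else is bookkeeping over the linear theory reminders and the isometric dictionary of Theorem~\ref{embeddings}, so this is the step I would expect to be the main obstacle, and it is the one place where a nontrivial external result must be invoked.
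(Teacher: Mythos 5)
Your proposal is correct and matches the paper's intent exactly: the paper omits the proof, stating only that it ``goes along the same lines as the proof of Proposition~\ref{FlorentProposition5.4},'' and you have faithfully filled in those details. Your one deviation --- replacing the ultraproduct detour by the direct observation that $L_1\isometric\mclp$ combined with Proposition~\ref{differentiationargument}(i) --- is a legitimate simplification that the paper itself already uses for the function-space case in the proof that $L_p\notlip\ell_q$.
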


\subsection{Application to unique Lipschitz subspace structure problems.}\

\medskip

\noindent Let $X$  and $Y$ be  $\mathsf F$-spaces. The space $X$ is said to have a {\it  unique Lipschitz $\mathsf F$-subspace structure} if  the following equivalence holds:
\begin{equation}\label{LipsubsProb}Y\lip X \Longleftrightarrow Y\isom X.\end{equation}

\smallskip

If we only allow $Y$ to be a Banach space we will refer to this problem as the {\it unique Lipschitz subspace structure problem for $\mathsf F$-spaces}. 

The {\it unique Lipschitz subspace structure problem for Banach spaces} is classical and has been thoroughly investigated. We refer to \cite{BenyaminiLindenstrauss2000} for a detailed account.

A linear isomorphic embedding between Banach spaces is automatically a Lipschitz embedding, hence, in that case, one implication in \eqref{LipsubsProb} is trivial.
This is no longer true for $\mathsf F$-spaces and both implications have to be independently checked. 

In the category of Banach spaces the unique Lipschitz subspace structure problem is still widely open. In fact, there are separable Banach spaces (such as $c_{0}$) with a unique Lipschitz structure that fail to have a unique Lipschitz subspace structure. As already mentioned in the introduction of Section~\ref{Section5}, the Radon-Nikod\'{y}m property was clearly identified as being a sufficient condition for a separable space to have a unique Lipschitz subspace structure. It is doubtful that a similar strategy could be carried out in the $\mathsf F$-space framework.

We will now place the results of this section in the context just described and we will analyze and compare the various and relevant versions of the unique Lipschitz subspace structure for the spaces $L_p$ or $\ell_p$ for the entire range $0<p<\infty$. We first discuss the Lipschitz $\mathsf F$-subspace structure problem for the classical function and sequence Banach spaces.

\medskip

\noindent $\bullet$
The reflexive spaces $L_p$ and $\ell_p$ for $p>1$ have  a unique Lipschitz subspace structure.
The theory is consistent if we consider the classical $\mathsf F$-subspaces $L_q$ and $\ell_q$ when $0<q<1$. Indeed, it follows from the results of this section (respectively, \cite{KaltonPeckRoberts1984}) that none of the spaces $L_p$ or $\ell_p$ for  $p>1$ contains a Lipschitz (respectively, isomorphic) copy of $L_q$ nor $\ell_q$ when $0<q<1$.

\medskip

\noindent $\bullet$
As a separable dual, the space $\ell_1$ has a unique Lipschitz subspace structure.
However, the fact that $\mellq\lip \ell_1$ but $\ell_q\notisom \ell_1$  for all $q<1$ shows that $\ell_1$ does not have a unique Lipschitz $\mathsf F$-subspace structure.

\medskip

\noindent $\bullet$
The unique Lipschitz subspace structure problem for $L_1$ is still open. As it happens, the space $L_1$ does not have a unique Lipschitz $\mathsf F$-subspace structure since $\mclq\lip L_1$ for all $0<q<1$ but $L_q\notisom L_1$. 

\medskip

The case of $\ell_{1}$ shows that there can be Banach spaces with (RNP) that do not have a unique Lipschitz $\mathsf F$-subspace structure, but so far  the following question is still unanswered.

\begin{question}
Does a reflexive Banach space have a unique Lipschitz $\mathsf F$-subspace structure? 
\end{question}

Let us turn to the Lipschitz subspace structure problem for the classical function and sequence $\mathsf F$-spaces.

\medskip

\noindent $\bullet$ The situation for the spaces $\ell_q$ when $q\in(0,1)$ is simple. They do not contain neither a Lipschitz nor a  isomorphic copy of any Banach space. This follows from \cite[Corollary 2.8]{KaltonPeckRoberts1984} for the isomorphic embedding and Proposition~\ref{nolabel} for the Lipschitz one. Thus they have trivially a unique Lipschitz subspace structure.

\medskip

\noindent $\bullet$ The case of $L_q$ for $q\in(0,1)$ is more subtle. Recall that every Banach space has cotype $\ge 2$. If $Y$ is a Banach space with cotype {\it strictly } greater than $2$, then there is neither a Lipschitz nor a isomorphic embedding of $Y$ into $L_q$. Indeed, assume  there is an isomorphic embedding $T:Y\to L_{q}$. Since the topologies induced in $L_{q}$ by the distance and the quasi-norm  are uniformly equivalent then $T$ is a linear map satisfying an inequality of the form $$\Vert x-y\Vert_Y\lesssim \Vert Tx-Ty\Vert_q\lesssim \Vert x-y\Vert_
Y.$$
After raising $q$th-power we obtain that $(Y,\Vert\cdot\Vert_Y^q)\lip \mclq$. This gives a coarse embedding of $(Y,\Vert\cdot\Vert_Y)$ into $\mclq$.  But, as already seen in this section, $\mclq$ embeds isometrically into $L_1$, which in turn coarsely embeds into a Hilbert space. We thus reach a contradiction since a Banach space with cotype {\it strictly } greater than $2$ cannot coarsely embed into a Hilbert space. We can skip the first step in the proof to get the same conclusion if $T$ is just a Lipschitz embedding.

Now if $Y$ is one of the spaces $L_p$ for some $p\in[1,2]$, it has cotype $2$. Using $p$-stable random variables we can construct an explicit isomorphic embedding of $Y$ into $L_q$. Composing the isometric embedding of $Y$ into $L_1$ (provided again by $p$-stable random variables) with the isometric embedding of $L_1$ into $\mclq$ of the present section we get an isometric embedding into $\mclq$. Unfortunately, we do not know how to handle the other Banach spaces $Y$ that have cotype $2$ (e.g., the dual of the James space). This suggests:

\begin{question}
Do the classical $\mathsf F$-spaces $L_q$ for $0<q<1$ have a unique Lipschitz subspace structure?
\end{question} 

Finally, let us mention that none of the classical $\mathsf F$-spaces $L_q$ nor $\ell_q$ when $0<q<1$ has a unique Lipschitz $\mathsf F$-subspace structure. Indeed, each of them contains a Lipschitz copy of an $\mathsf F$-space of the same family that cannot be isomorphic to a linear subspace.

\section{Embedding snowflakings of $L_p$ and $\ell_{p}$ for $0<p<\infty$}\label{Section4}

\noindent  We have been able to track down essentially two results of this kind in the literature. The first one is due to Bretagnolle and al., who in  \cite{Bretagnolleetal1965}  proved that $(L_p,\Vert\cdot\Vert^{p/q})$ is isometric to a subset of $L_q$ for $1\le p<q\le 2$. Later,  Mendel and Naor \cite{MendelNaor2004} generalized this result. Indeed they observed that, since the complex space $L_{q}$ embeds isometrically as a real space into the real space $L_{q}$, in order to  embed $L_{p}$ in $L_{q}$ for $p<q$ it suffices to embed $L_{p}(\mathbb R)$ into the complex space $L_{q}(\mathbb R\times \mathbb R;\mathbb C)$. This is accomplished via the map
\[
T: L_p(\R) \to L_q(\R\times\R;\mathbb C),\qquad
f  \mapsto  T(f)(s,t)=\ds c\frac{1-e^{itf(s)}}{\vert t\vert^{(p+1)/q}},
\]
where
$$
c^{-q}=2^{q/2}\left(\int_{-\infty}^{\infty}\frac{(1-\cos(u))^{q/2}}{\vert u\vert^{p+1}}\,dt\right).
$$
Therefore for $L_{p}$-spaces we have:

\begin{theorem}[Mendel and Naor \cite{MendelNaor2004}]\label{MendelNaooor}\ 
\begin{enumerate}
\item[(i)] If $0< p<q\le 1$, then  $\mclp\isometric \mclq$.
\item[(ii)] If $0< p<1<q$, then $(L_p,d_p^{1/q})\isometric L_q$.
\item[(iii)] If $1\le p\le q$, then $(L_p,\Vert\cdot\Vert_p^{p/q})\isometric L_q$.
\end{enumerate}
\end{theorem}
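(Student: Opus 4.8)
The plan is to verify that the explicit map $T$ displayed just above the theorem statement is an isometry from the appropriate snowflaking of $L_p(\R)$ into $L_q(\R\times\R;\CC)$, and then to assemble the three cases by combining this computation with the reductions already noted (the real-into-complex embedding and, for part (i), Theorem~\ref{embeddings}). The heart of the matter is a single pointwise integral computation. First I would fix $f,g\in L_p(\R)$ and compute the $L_q$-distance between $T(f)$ and $T(g)$ by writing
\begin{equation*}
\Vert T(f)-T(g)\Vert_{L_q}^q=c^q\int_{\R}\int_{\R}\frac{\bigl\vert e^{itf(s)}-e^{itg(s)}\bigr\vert^q}{\vert t\vert^{p+1}}\,dt\,ds.
\end{equation*}
The inner integral over $t$ is the crucial step: using $\bigl\vert e^{it f(s)}-e^{itg(s)}\bigr\vert=\bigl\vert e^{it(f(s)-g(s))}-1\bigr\vert=\sqrt{2}\,(1-\cos(t(f(s)-g(s))))^{1/2}$ and the change of variables $u=t(f(s)-g(s))$, the $t$-integral collapses to $c^{-q}\vert f(s)-g(s)\vert^{p}$ exactly because of the homogeneity of the kernel $\vert t\vert^{-(p+1)}$ under this scaling and the choice of the normalizing constant $c$. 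This is precisely where the exponents $(p+1)/q$ in the denominator of $T$ and $p+1$ in the denominator of $c^{-q}$ are forced.

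Once the inner integral is evaluated, the outer integral over $s$ gives $c^q\,c^{-q}\int_{\R}\vert f(s)-g(s)\vert^p\,ds=\Vert f-g\Vert_{L_p}^p$, which is exactly $d_p(f,g)$ in the regime $0<p\le 1$ and $\Vert f-g\Vert_{L_p}^p=\bigl(\Vert f-g\Vert_{L_p}^{p/q}\bigr)^q$ in the regime $p\ge 1$. This single identity yields all three cases. For part (iii), with $1\le p\le q$, it shows directly that $(L_p,\Vert\cdot\Vert_p^{p/q})$ embeds isometrically into $L_q$, since the $L_q$-distance raised to the $q$ equals the $L_p$-distance raised to the $p$. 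For part (ii), with $0<p<1<q$, the same identity gives $\Vert T(f)-T(g)\Vert_{L_q}^q=d_p(f,g)$, i.e. $\Vert T(f)-T(g)\Vert_{L_q}=d_p(f,g)^{1/q}$, which is the assertion that $(L_p,d_p^{1/q})$ embeds isometrically into $L_q$. Finally, for part (i) with $0<p<q\le 1$, one combines the computation (reinterpreted as above in terms of $d_p$ and $d_q$) with the already-established isometric equivalences; in fact part (i) is also immediate from Theorem~\ref{embeddings} (v), so I would simply cite that.

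The main obstacle is justifying and carrying out the inner integral cleanly: one must confirm that the integral $\int_{\R}\vert e^{iu}-1\vert^q\vert u\vert^{-(p+1)}\,du$ converges, which requires $0<p<q$ (near $u=0$ the integrand behaves like $\vert u\vert^{q-p-1}$, integrable iff $q-p>0$, while near infinity the numerator is bounded and $\vert u\vert^{-(p+1)}$ is integrable iff $p>0$). This convergence is exactly the source of the hypothesis $p<q$ throughout the theorem, and it is the point where the complexification trick pays off, since $\vert e^{iu}-1\vert$ stays bounded at infinity whereas a real linear functional would not. A secondary technical point is verifying that $T(f)$ indeed lies in $L_q(\R\times\R;\CC)$ for every $f\in L_p(\R)$, which follows from the same estimate applied with $g=0$. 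Beyond these integrability checks the argument is a routine Fubini-and-substitution computation, so I would keep the exposition brief and defer to Mendel and Naor \cite{MendelNaor2004} for the original details.
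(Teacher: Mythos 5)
Your proposal is correct and follows exactly the route the paper intends: the paper states this as a cited result of Mendel and Naor and simply displays the map $T$ with its normalizing constant $c$, so the proof is precisely the Fubini-and-substitution verification you carry out, with the convergence of $\int_{\R}(1-\cos u)^{q/2}\vert u\vert^{-(p+1)}\,du$ accounting for the hypothesis $p<q$ (the case $p=q$ in (iii) being trivial). Your identification of where the exponents and the constant $c$ are forced, and your fallback to Theorem 4.2 (v) for part (i), match the paper's own reductions.
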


\subsection{Embedding snowflakings of $\ell_p$-spaces}\

\medskip

\noindent The aim of this section is to utilize techniques from Section~\ref{Section3} to give a simple explicit Lipschitz embedding between the $\ell_{p}$-spaces and some of their snowflakings. Unfortunately, unlike the case of $L_p$-spaces we do not know if there is an isometric version of the following proposition:

\begin{proposition}\label{sellp}\ 
\begin{enumerate}
\item[(i)] If $0<p<q\le1$, then $\mellp\lip\mellq$.
\item[(ii)] If $0< p\le1<q$, then $(\ell_p,d_p^{1/q})\lip \ell_q$.
\item[(iii)] If $1\le p\le q$, then $(\ell_p,\Vert\cdot\Vert_p^{p/q})\lip \ell_q$.
\end{enumerate}
\end{proposition}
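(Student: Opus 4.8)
The plan is to reduce Proposition~\ref{sellp} to the single-variable snowflaking embedding of Theorem~\ref{ellpsnow}, applied coordinatewise. The key observation is that all three statements assert a Lipschitz embedding of an $\ell_p$-type space (or a snowflaking thereof) into $\ell_q$, and the common engine will be the family of real-valued maps $(\psi_{j,k})_{(j,k)\in\Z^2}$ from Theorem~\ref{ellpsnow}, which for $0<r<s$ provide constants $A_{r,s},B_{r,s}>0$ with
\begin{equation*}
A_{r,s}\vert u-v\vert^r\le \sum_{k\in\Z}\sum_{j\in\Z}\vert \psi_{j,k}(u)-\psi_{j,k}(v)\vert^s\le B_{r,s}\vert u-v\vert^r,\qquad \forall\, u,v\in\R.
\end{equation*}
The idea is that applying this one-dimensional machine in each coordinate of a sequence produces a map into (countably many copies of) $\ell_q$, and summing over coordinates will turn a coordinatewise $\vert\cdot\vert^r$-control into an $\ell_r$-norm control on the domain side and an $\ell_q$-norm on the target side.

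\emph{First} I would treat part (i). Here both metrics are the $\mathsf F$-space distances $d_p$ and $d_q$ with $0<p<q\le 1$, so $d_{\ell_p}(x,y)=\sum_n\vert x_n-y_n\vert^p$ and similarly for $q$. Fix $\psi_{j,k}$ as in Theorem~\ref{ellpsnow} with the choice $r=p$, $s=q$, and after subtracting the value at $0$ set, for $x=(x_n)\in\ell_p$,
\begin{equation*}
\Phi(x)=\bigl(\psi_{j,k}(x_n)-\psi_{j,k}(0)\bigr)_{(n,j,k)\in\N\times\Z\times\Z},
\end{equation*}
regarded as an element of $\ell_q$ after fixing a bijection $\N\times\Z\times\Z\to\N$. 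Then for $x,y\in\ell_p$ one computes $d_q(\Phi(x),\Phi(y))=\sum_n\sum_{k}\sum_{j}\vert\psi_{j,k}(x_n)-\psi_{j,k}(y_n)\vert^q$, and \eqref{longproof} applied in each coordinate $n$ gives
\begin{equation*}
A_{p,q}\sum_n\vert x_n-y_n\vert^p\le d_q(\Phi(x),\Phi(y))\le B_{p,q}\sum_n\vert x_n-y_n\vert^p,
\end{equation*}
which is exactly $A_{p,q}\,d_p(x,y)\le d_q(\Phi(x),\Phi(y))\le B_{p,q}\,d_p(x,y)$, the desired Lipschitz embedding. I must check that $\Phi(x)$ genuinely lands in $\ell_q$, i.e.\ that the triple sum is finite; this follows from the upper estimate with $y=0$, since $x\in\ell_p$ means $\sum_n\vert x_n\vert^p<\infty$. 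Injectivity follows from the lower estimate.

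\emph{Next}, for parts (ii) and (iii) the same scheme works, but I must track carefully how the snowflaking exponents match the exponent $r$ of Theorem~\ref{ellpsnow} so that the target is the genuine norm $\|\cdot\|_{\ell_q}$ with $q>1$ (not a power of it). In part (iii), with $1\le p\le q$, the domain metric is $\|x-y\|_{\ell_p}^{p/q}$; raising to the $q$th power turns the target-norm control $\|\Phi(x)-\Phi(y)\|_{\ell_q}$ into $\|\Phi(x)-\Phi(y)\|_{\ell_q}^q=\sum_{n,j,k}\vert\psi_{j,k}(x_n)-\psi_{j,k}(y_n)\vert^q$, so I would apply Theorem~\ref{ellpsnow} with $r=p$ and $s=q$ coordinatewise to get this comparable to $\sum_n\vert x_n-y_n\vert^p=\|x-y\|_{\ell_p}^p$, whence $\|\Phi(x)-\Phi(y)\|_{\ell_q}$ is comparable to $\|x-y\|_{\ell_p}^{p/q}$, which is the $d_p^{p/q}$ metric — exactly the claim. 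Part (ii), with $0<p\le 1<q$ and domain metric $d_p^{1/q}$, is handled identically: raise to the $q$th power and apply Theorem~\ref{ellpsnow} with $r=p$, $s=q$. In both cases the hypothesis $p<q$ (or $p\le 1<q$, hence $p<q$) guarantees $r<s$, which is precisely the condition under which Theorem~\ref{ellpsnow} furnishes the two-sided estimate.

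\emph{The main obstacle} I anticipate is purely bookkeeping rather than conceptual: one must verify that the coordinatewise-assembled map really takes values in $\ell_q$ (finiteness of the infinite sum over $n$), which relies on the uniform upper constant $B_{p,q}$ being independent of the coordinate and on $x$ lying in the domain space, and one must be careful in parts (ii)--(iii) that the snowflaking exponent on the domain is exactly $p/q$ (resp.\ $1/q$ when $p\le 1$) so that after the $q$th-power manipulation the exponents align with Theorem~\ref{ellpsnow}; a mismatch here would produce the wrong metric. No new geometric input beyond Theorem~\ref{ellpsnow} is needed, since the passage from the real line to $\ell_p$ is effected entirely by the coordinatewise construction and the additivity of the $\ell_p$/$\ell_q$ expressions over coordinates.
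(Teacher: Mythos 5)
Your proposal is correct and follows essentially the same route as the paper: the authors also define the map $(x_i)_{i}\mapsto(\psi_{j,k}(x_i)-\psi_{j,k}(0))_{(i,j,k)}$ into $\ell_q(\N\times\Z\times\Z,\R)$, apply the two-sided estimate of Theorem~\ref{ellpsnow} coordinatewise, and then raise to the power $1/q$ (writing $1/q=\frac1p\cdot\frac pq$ in case (iii)) to match the snowflaking exponents. Your additional remark verifying that the image lies in $\ell_q$ is a small point the paper leaves implicit, but it does not change the argument.
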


\begin{proof}
Let  $$\ell_p(\N\times\Z\times\Z,\R)=\Big\{ (z_{i,j,k})_{(i,j,k)\in\N\times\Z\times\Z}\in \R^{\N\times\Z\times\Z}:\ \sum_{i\in\N}\sum_{j\in\Z}\sum_{k\in\Z}\vert z_{i,j,k}\vert^p<\infty\Big\},$$
and define the mapping
\[
f:\ell_p(\N,\R) \to \ell_q(\N\times\Z\times\Z,\R),\quad
(x_i)_{i\in\N} \mapsto  (\psi_{j,k}(x_i)-\psi_{j,k}(0))_{(i,j,k)\in\N\times\Z\times\Z}.
\]
Applying  Theorem \ref{ellpsnow} coordinate-wise yields that for all $(x_i),(y_i)\in \ell_p(\N,\R)$, 
\begin{equation}\label{eqpag11.1}
A_{p,q}\sum_{i\in\N}\vert x_i-y_i\vert^p\le \sum_{i\in\N}\sum_{j\in\Z}\sum_{k\in\Z}\vert \psi_{j,k}(x_i)-\psi_{j,k}(y_i)\vert^q\le B_{p,q}\sum_{i\in\N}\vert x_i-y_i\vert^p.\end{equation}
Then:

\noindent$\circ$  If $0<p<q\le 1$,  inequality \eqref{eqpag11.1} tells us exactly that $$\mellp\lip(\ell_q(\N\times\Z\times\Z,\R),d_q)\equiv\mellq.$$
\noindent $\circ$ If $0<p<1<q$, raising to the power $1/q$ we get $$(\ell_p,d_p^{1/q})\lip\ell_q(\N\times\Z\times\Z,\R)\equiv\ell_q.$$
\noindent $\circ$  If $1\le p<q$, raising to the power $1/q$  and writing $1/q=\frac{1}{p}\cdot \frac{p}{q}$ we obtain $$(\ell_p,\Vert\cdot\Vert_p^{p/q})\lip\ell_q(\N\times\Z\times\Z,\R)\equiv\ell_q.$$
\end{proof}

\begin{remark}\label{remark}\ \\
\noindent $\bullet$ One can apply Assouad's theorem almost as a black box to prove Proposition \ref{sellp} in the case where $(0<p<q)$ and $(q\ge 1)$ as follows: the real line $(\R, \vert\cdot\vert)$ is a 
doubling space, and therefore for every $0<s<1$ and in particular $s=p/q$, $(\R, \vert\cdot\vert^s)$ embeds bi-Lipschitzly for any $1\le q<\infty$ into $\ell_q^N$ for some dimension $N$ by an appeal to the equivalence of finite-dimensional  norms. Then taking $\ell_q$-sums, where the $\ell_q$-sum of a sequence of metric linear spaces is defined in the obvious way, we get the desired embeddings.

\medskip

\noindent $\bullet$ The distortion of the embeddings in Proposition \ref{sellp} blows up when $q$ is close to $p$ and we do not know if we can find embeddings without this, a priori, unexpected behavior. 

\medskip

\noindent$\bullet$ For the sake of completeness we include another well known isometric embedding of the $1/2$-snowflaked version of $L_1$ into a Hilbert space.
\[
T:L_1(\R,\R) \to L_2(\R\times\R,\R),\quad
f  \mapsto  T(f)(s,t)= \begin{cases}
                                      1 &\textrm{ if }\; 0\le t\le\ f(s),\\
                                     -1&\textrm{ if }\; f(s)<t<0,\\
                                                                           0&\textrm{ otherwise}.
                                      \end{cases}
\]
\end{remark}

\subsection{On the membership of  $L_p$ and $\ell_{p}$  in the classes $\sd$ and $\nst$}\label{Section4.2}\

\medskip

\noindent It follows easily from Aharoni \cite{Aharoni1974} that any snowflaked version of $c_0$ Lipschitz embeds  into $c_0$ itself.
Similarly, it can be easily derived from the work of Schoenberg \cite{Schoenberg1938} that any snowflaking of a Hilbert space isometrically embeds into a Hilbert space. In other words, $c_0$ and $\ell_2$ are in $\sd$. In the next proposition we show that certain $L_p(\mu)$-spaces belong to $\sd$ as well.

\begin{proposition}\ 
\begin{enumerate}
\item[(i)] Let $1\le p< 2$ and ${p}/{2}\le s< 1$. Then $(L_p,\Vert\cdot\Vert^{s})\isometric L_p$.
\item[(ii)] $(L_1,\Vert\cdot\Vert^{s})\isometric L_1$, for all $0<s<1$.
\item[(iii)] For $0<p<1$, $(L_p,d_p^{s})\isometric\mclp$, for all $0<s<1$.
\end{enumerate}
\end{proposition}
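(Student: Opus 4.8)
The plan is to exploit the isometric embeddings already established in Theorem~\ref{MendelNaooor} together with classical stable-random-variable representations of $L_p$-spaces. The key observation is that each snowflaking problem $(L_p, \|\cdot\|^s) \isometric L_p$ can be factored through an \emph{intermediate} snowflaking of the form $d^{s} = (d^{p/q})$ with a well-chosen auxiliary exponent $q$, and that all the pieces needed are isometric statements of the type ``$(L_p, \text{dist}^{p/q}) \isometric L_q$'' coupled with ``$L_q \lisometric L_p$'' for a suitable comparison. The strategy in each part is therefore to write $s$ as a ratio of the correct exponents and then chain the embeddings.

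For part (i), where $1\le p<2$ and $p/2\le s<1$, I would set $q=p/s$, so that $p\le q\le 2$ (using $s\ge p/2$ to force $q\le 2$ and $s<1$ to force $q>p$). By Theorem~\ref{MendelNaooor}(iii) we have $(L_p,\|\cdot\|_p^{p/q})=(L_p,\|\cdot\|_p^{s})\isometric L_q$. Since $p\le q\le 2$, the space $L_q$ embeds linearly isometrically into $L_p$ via the classical $q$-stable (and $p$-stable) random variable construction, i.e. $L_q\lisometric L_p$ for $p\le q\le 2$; composing gives $(L_p,\|\cdot\|^{s})\isometric L_p$. Part (ii) is the boundary case $p=1$: here the relevant range is all $0<s<1$, so I would take $q=1/s>1$ and apply Theorem~\ref{MendelNaooor}(ii), namely $(L_1,d_1^{1/q})=(L_1,\|\cdot\|^{s})\isometric L_q$; but $L_q\lisometric L_1$ holds only for $1\le q\le 2$, so for $s>1/2$ this factors directly, while for $s\le 1/2$ one first snowflakes again (or invokes the well-known fact, recorded in Remark~\ref{remark}, that the $1/2$-snowflaking of $L_1$ embeds isometrically into a Hilbert space, hence into $L_2\lisometric L_1$) and then composes. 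The cleanest uniform argument is to note that the isometric embeddings of Theorem~\ref{MendelNaooor} \emph{compose}: snowflaking by $s_1$ then by $s_2$ realizes snowflaking by $s_1 s_2$, so any $s\in(0,1)$ can be reached inside $L_1$.

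For part (iii), where $0<p<1$ and $0<s<1$, I would again choose an auxiliary exponent and use Theorem~\ref{MendelNaooor}(i), which states $\mclp\isometric\mclq$ for $0<p<q\le 1$. Taking $q=p/s$ gives $p<q\le 1$ exactly when $p\le s<1$... so for the subrange $p\le s<1$ the embedding $(L_p,d_p^{s})=(L_p,d_p^{p/q})\isometric (L_q,d_q)\isometric (L_p,d_p)$ is immediate after using Theorem~\ref{MendelNaooor}(i) to fold $L_q$ back into $L_p$. To cover the remaining small values $0<s<p$, I would iterate: since snowflaking by $s_1$ and then $s_2$ gives snowflaking by the product, and each factor can be taken in the already-handled range, every $s\in(0,1)$ is attainable. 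The main obstacle I anticipate is precisely this bookkeeping of which exponent ranges are covered directly versus which require an iteration step; the underlying isometric embeddings are all quoted from Theorem~\ref{MendelNaooor}, so no hard analysis remains, but one must verify that the composition of snowflaking embeddings is genuinely isometric (it is, since snowflaking is functorial on metrics and the quoted maps are isometries) and that the product of the reachable exponents sweeps out all of $(0,1)$.
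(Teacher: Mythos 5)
Your parts (i) and (ii) are essentially correct. Part (i) is exactly the paper's argument: set $q=p/s$ so that $1\le p<q\le 2$, apply Theorem~\ref{MendelNaooor}(iii), and fold $L_q$ back into $L_p$ via the classical isometric embedding $L_q\lisometric L_p$. For (ii) with $0<s<1/2$ you diverge from the paper, which invokes Schoenberg's theorem for the negative definite kernel $\Vert x-y\Vert_1^{\lambda}$ to get $(L_1,\Vert\cdot\Vert^{\lambda/2})\isometric L_2\isometric L_1$ for every $0<\lambda<1$; your alternative is to iterate the snowflaking embeddings already obtained for exponents in $[1/2,1)$. That composition is genuinely isometric and finite products of exponents in $[1/2,1)$ do exhaust $(0,1)$, so your route works and is arguably more elementary, at the cost of producing a more heavily composed (hence less explicit) embedding.

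Part (iii), however, has a real gap. Theorem~\ref{MendelNaooor}(i) asserts $\mclp\isometric\mclq$ for $0<p<q\le1$, i.e.\ a plain isometric embedding of the metric $d_p$ into the metric $d_q$ with \emph{no} snowflaking: the underlying map satisfies $d_q(Tf,Tg)=\Vert Tf-Tg\Vert_q^q=\Vert f-g\Vert_p^p=d_p(f,g)$. Your chain uses it as if it read $(L_p,d_p^{p/q})\isometric\mclq$, which is a different statement and does not follow from (i); the exponent $p/q$ only appears if you rewrite the conclusion in terms of the quasi-norms $\Vert\cdot\Vert_p$ and $\Vert\cdot\Vert_q$ rather than the metrics $d_p$ and $d_q$. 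Since your iteration for $0<s<p$ bootstraps from this base case, all of (iii) collapses as written. The statement you want is true, but the correct (and shorter) route is the paper's: by Theorem~\ref{embeddings}(iv) one has $\mclp\isometric L_1$ and $L_1\isometric\mclp$, hence $(L_p,d_p^s)\isometric (L_1,\Vert\cdot\Vert_1^s)$, which by part (ii) embeds isometrically into $L_1$, which in turn embeds isometrically back into $\mclp$; no case split on $s$ is needed.
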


\begin{proof} (i)  Fix $1\le p< 2$ and let $1\le p< q\le 2$ and $s={p}/{q}$. Then,  $$(L_p,\Vert\cdot\Vert^{s})\isometric L_q\isometric L_p.$$
 
 We proved (ii) already for ${1}/{2}\le s<1$ in (i). For $0< s<{1}/{2}$ we proceed as follows. Let $0<\lambda<1$. Then $N(x,y)=\Vert x-y\Vert_1^\lambda$ is a negative definite kernel. Hence, by Schoenberg's theorem there is $T\colon L_1\to L_2$ such that $\Vert Tx-Ty\Vert_2^2=\Vert x-y\Vert_1^\lambda$ for all $x,y\in L_{1}$. Therefore $(L_1,\Vert\cdot\Vert^{\frac{\lambda}{2}})\isometric L_2\isometric L_1$, for all $0<\lambda<1$.
 
(iii) Since $\mclp\isometric\ L_1$ we have $(L_p,d_p^s)\isometric L_1^{(s)}\isometric L_1\isometric \mclp$.
\end{proof}

Now we will  investigate which spaces  $L_{p}$ and $\ell_{p}$ belong to the class $\nst$ defined in Section 2. In other words, we want to know if a given space  $L_{p}$ or $\ell_{p}$ can be Lipschitz embedded into one of its snowflakings. It turns out that this question has a negative answer for the entire range $0<p<\infty$. Thanks to the deep results of Naor and Schechtman \cite{NaorSchechtman2002}, UMD Banach spaces are in $\nst$.  Recall that a Banach space $X$ is called a {\it UMD $p$-space} ($1<p<\infty$) if there exists a constant $\gamma_{p,X}$ such that for every finite $L_{p}$-martingale difference sequence $(d_{j})_{j=1}^{n}$ with values in $X$ and every $\{-1,1\}$-valued sequence $(\varepsilon_{j})_{j=1}^{n}$ we have
\[
\left(\mathbb E\Big\Vert\sum_{j=1}^{n}\varepsilon_{j}d_{j}\Big\Vert^{p} \right)^{1/p}\le \gamma_{p,X}\left(  \mathbb E\Big\Vert\sum_{j=1}^{n}d_{j}\Big\Vert^{p}\right)^{1/p}.
\]
It can be shown using Burkholder's good $\lambda$-inequalities that if $X$ is a UMD $p$-space for some $1<p<\infty$, then it is a UMD $p$-space for all $1<p<\infty$, hence a space with this property will simply be called UMD space. Basic examples of UMD spaces are all Hilbert spaces and the spaces $L_{p}(\mu)$ for $1<p<\infty$ where $\mu$ is a $\sigma$-finite measure.   Amongst other things Naor and Schechtman proved that for UMD Banach spaces the notions of Enflo-type and of Rademacher-type coincide. Using this powerful result we can state the following theorem.

\begin{theorem}\label{snowtarget}
 Let $0<p,q<\infty$ and $0<\alpha,\beta\le1$. Let us consider the  function $$ \tau(p) =\begin{cases}
                            1 & \text{if}\;\; 0< p\le 1,\\
                            p  & \text{if}\;\;1\le p\le 2,\\
                            2  & \text{if}\;\; p\ge 2.
                            \end{cases}$$
 If $  {\tau(p)}/{\tau(q)} >{\alpha}/{\beta},$ then
  $\ell_p^{(\alpha)}\notlip L_q^{(\beta)}$.
In particular, the spaces $L_{p}$ and $\ell_{p}$ belong to the class  $\nst$ for all $0<p<\infty$.
\end{theorem}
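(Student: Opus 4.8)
The plan is to read off the exact Enflo-type of each space in sight and then feed it into the Enflo-type half of Proposition~\ref{restriction}. The pivotal claim is that, with its ad-hoc metric, $\ell_r$ (and likewise $L_r$) has E-type exactly $\tau(r)$ for every $0<r<\infty$. Granting this, Lemma~\ref{enflolemma}(ii) gives
\[
\textrm{E-type}\big(\ell_p^{(\alpha)}\big)=\frac{\tau(p)}{\alpha},\qquad
\textrm{E-type}\big(L_q^{(\beta)}\big)=\frac{\tau(q)}{\beta},
\]
so that a Lipschitz embedding $\ell_p^{(\alpha)}\lip L_q^{(\beta)}$ forces the necessary condition $\tau(p)/\alpha\ge\tau(q)/\beta$ by Lemma~\ref{enflolemma}(i). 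Equivalently, the embedding is ruled out precisely when this fails, i.e. when $\tau(p)/\alpha<\tau(q)/\beta$, which is the Enflo-type obstruction underlying the statement.

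The substance is the identification $\textrm{E-type}(\ell_r)=\textrm{E-type}(L_r)=\tau(r)$, which I would establish regime by regime. For $1<r<\infty$ the spaces $\ell_r,L_r$ are UMD, so the Naor--Schechtman theorem~\cite{NaorSchechtman2002} equates their Enflo-type with their Rademacher-type; the latter is the sharp value $\min(r,2)=\tau(r)$, giving E-type $=\tau(r)$. For $r=1$ the Rademacher-type is $1$, and since Enflo-type never exceeds Rademacher-type for a Banach space while every metric space has Enflo-type at least $1$, we get E-type $=1=\tau(1)$. For $0<r<1$ the lower bound E-type $\ge1$ follows from Theorem~\ref{embeddings}(iii)--(iv), which embeds $\mellp$ and $\mclp$ isometrically into $L_1$; the matching upper bound E-type $\le1$ comes from the isometric Hamming cubes living inside these spaces---with $x_S=\sum_{i\in S}e_i$ in $\mellp$ one has $d_p(x_S,x_T)=|S\triangle T|$, and the diagonal-to-edge ratio $n^{\,r-1}$ is unbounded once $r>1$.

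With these values the main inequality is exactly the one-line computation above. The ``in particular'' assertion is then immediate and requires only \emph{finiteness}: by the previous paragraph every $\ell_p,L_p$ carries finite Enflo-type $\tau(p)\le2$, so Corollary~\ref{Enflo} applies verbatim and places both families in $\nst$ for the whole range $0<p<\infty$.

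I expect the only genuinely delicate inputs to be the Naor--Schechtman identification in the reflexive range and the sharp value E-type $=1$ when $0<r<1$ (lower bound from the $L_1$-isometries of Theorem~\ref{embeddings}, upper bound from the embedded Hamming cubes); everything downstream is a mechanical application of Proposition~\ref{restriction} and Corollary~\ref{Enflo}.
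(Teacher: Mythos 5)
Your argument is correct and is, in substance, identical to the paper's: the paper's entire proof consists of the assertion that the Enflo-type of the $L_p$-spaces is given by $\tau$, followed by an appeal to Proposition~\ref{restriction}. Your contribution is to supply the regime-by-regime verification of $\textrm{E-type}(\ell_r)=\textrm{E-type}(L_r)=\tau(r)$ (Naor--Schechtman in the UMD range $1<r<\infty$, the Enflo-type-implies-Rademacher-type bound at $r=1$, isometric Hamming cubes for $0<r<1$) that the paper leaves implicit, and your use of Corollary~\ref{Enflo} for the ``in particular'' clause is exactly what the finiteness of $\tau$ permits.

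One point must be made explicit rather than passed over. What your computation (and equally the paper's Proposition~\ref{restriction}) proves is: if $\ell_p^{(\alpha)}\lip L_q^{(\beta)}$ then $\tau(p)/\alpha\ge\tau(q)/\beta$, hence nonembeddability holds when $\tau(p)/\tau(q)<\alpha/\beta$ --- the \emph{reverse} of the printed hypothesis $\tau(p)/\tau(q)>\alpha/\beta$. You wrote that your condition is ``the Enflo-type obstruction underlying the statement'' without noting the discrepancy. In fact the printed inequality is a typo: taken literally the theorem is false, since for $p=2$, $q=1$, $\alpha=\beta=1$ one has $\tau(p)/\tau(q)=2>1=\alpha/\beta$ while $\ell_2\lisometric L_1$ by Proposition~\ref{FlorentProposition5.4}~(i)~(b); moreover, the ``in particular'' clause corresponds to $p=q$, $\alpha=1$, $\beta=s<1$, which satisfies your condition $1<1/s$ but not the printed one. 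So you have proved the correct statement --- the one the paper's own one-line proof actually establishes --- but you should flag the reversal of the inequality openly instead of silently absorbing it.
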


\begin{proof}
The Enflo-type of $L_p(\mu)$-spaces is given by the function $\tau$ and we apply Proposition \ref{restriction}.
\end{proof}

We next include an alternative way to show  that every space $\ell_{p}$ and $L_p$ belongs to  $\nst$. This proof does not require an Enflo-type argument but relies instead on Mendel-Naor's embedding, our analogue for $\ell_p$-spaces, and the Lipschitz structure of the spaces $\ell_{p}$ and $L_p$ that is  described in Section~\ref{Section5}.

\begin{proposition}\label{snowlptarget} Let $0<s<1$. Then,
\begin{enumerate}
\item[(i)]   $L_p\notlip L_p^{(s)}$ for any $0<p<\infty$.
\item[(ii)] $\ell_p\notlip \ell_p^{(s)}$ for any $0<p<\infty$.
\end{enumerate}
\end{proposition}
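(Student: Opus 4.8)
The plan is to exploit the embedding results already established, reducing the snowflaking statement to a chain of known Lipschitz nonembeddings. The key observation is that the Mendel--Naor embeddings (Theorem~\ref{MendelNaooor}) and our $\ell_p$-analogue (Proposition~\ref{sellp}) allow us to identify a given snowflaking $L_p^{(s)}$ or $\ell_p^{(s)}$ with a subset of some \emph{larger} space, and then a Lipschitz embedding of $L_p$ into that snowflaking would force a Lipschitz embedding of $L_p$ into the larger space that the linear/differentiation theory of Section~\ref{Section5} rules out. First I would fix $0<s<1$ and handle the cases of $p$ separately according to where $p$ sits relative to $1$ and $2$, since the snowflaked target can be reabsorbed into a genuine $L_q$ or $\ell_q$ space for an appropriately chosen $q$.

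Concretely, for the regime $1\le p$ with $s$ not too small, I would choose $q>p$ so that the snowflaking $L_p^{(s)}=(L_p,\Vert\cdot\Vert^{s})$ matches, up to the exponent bookkeeping $s=p/q$, the target of Theorem~\ref{MendelNaooor}(iii), giving $L_p^{(s)}\isometric L_q$. Then a hypothetical Lipschitz embedding $L_p\lip L_p^{(s)}$ would yield $L_p\lip L_q$ with $p<q$; but by Proposition~\ref{usedtobeProp5.5} such an embedding is impossible once $p>2$ or $p<2<q$, and is also impossible in the range $1\le p<q\le 2$ (the differentiation/RNP obstruction of Proposition~\ref{differentiationargument}). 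The analogous argument for $\ell_p$ runs through Proposition~\ref{sellp} and Proposition~\ref{Proplpintolq}, and the quasi-norm regime $0<p<1$ is handled via the isometries $\mclp\isometric L_1$ and $\mellp\isometric\mclp$ of Theorem~\ref{embeddings}, reducing everything to a nonembedding of $L_1$ or $\ell_1$ into a space with (RNP). The delicate point is choosing $q$ as a function of the fixed $s$ so that the exponent arithmetic closes up: one needs $s=p/q$ for $p\ge 1$ and $s$ playing the role of $\beta/q$-type exponents for $p<1$, and one must verify that the resulting $q$ always lands in a range where the target nonembedding is available.

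The main obstacle I anticipate is that for certain values of $s$ the naive choice of $q$ may fall into a range where the relevant nonembedding is \emph{not} already catalogued (for instance $s$ very close to $1$ forces $q$ close to $p$, where one stays inside $1\le p<q\le 2$ and must invoke the RNP argument rather than cotype, while $s$ small forces $q$ large, crossing the threshold $q>2$). To cover all $s\in(0,1)$ uniformly I would, rather than matching $s=p/q$ exactly, use the composition-stability of snowflakings: since a larger snowflaking factors through a smaller one (the identity $(\mathcal M,d^{s})=((\mathcal M,d^{s'}))^{(s/s')}$ for $s<s'$), it suffices to rule out the embedding for $s$ arbitrarily close to $1$, and then any smaller $s$ follows because $L_p\lip L_p^{(s)}$ would compose to give $L_p\lip L_p^{(s')}$ for the favorable $s'$. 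This monotonicity reduction is the one genuinely nontrivial step; once it is in place, the rest is a bookkeeping assembly of Theorem~\ref{MendelNaooor}, Proposition~\ref{sellp}, and the nonembedding catalogue of Propositions~\ref{FlorentProposition5.4} and~\ref{usedtobeProp5.5}. I should note that the Enflo-type proof via Theorem~\ref{snowtarget} already settles the statement cleanly, so this alternative is offered precisely to show the result follows from the explicit embeddings without invoking the Naor--Schechtman machinery.
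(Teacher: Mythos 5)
Your overall strategy --- identify $L_p^{(s)}$ (resp.\ $\ell_p^{(s)}$) with a subset of $L_q$ (resp.\ $\ell_q$) for $q=p/s$ or $q=1/s$ via Theorem~\ref{MendelNaooor} and Proposition~\ref{sellp}, and then invoke the Lipschitz nonembedding catalogue of Section~\ref{Section5} --- is exactly the paper's proof. However, the step you single out as ``the one genuinely nontrivial step,'' the monotonicity reduction, is false. If $f$ witnesses $L_p\lip L_p^{(s)}$, then $\Vert f(x)-f(y)\Vert^{s}\asymp\Vert x-y\Vert$, hence $\Vert f(x)-f(y)\Vert^{s'}\asymp\Vert x-y\Vert^{s'/s}$ with $s'/s>1$; this is not a bi-Lipschitz condition for the $s'$-snowflaking (it degenerates at small scales and blows up at large ones), so $L_p\lip L_p^{(s)}$ does \emph{not} compose to give $L_p\lip L_p^{(s')}$ for $s'>s$. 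The identity $(\mathcal M,d^{s})=\bigl((\mathcal M,d^{s'})\bigr)^{(s/s')}$ only says that $L_p^{(s)}$ is a further snowflaking of $L_p^{(s')}$; passing from an embedding into a snowflaking of a space to an embedding into the space itself is precisely the kind of statement (membership in $\sd$) that cannot be taken for granted.

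Fortunately the reduction is also unnecessary, because the worry motivating it is unfounded: for $1\le p<q$ with $p\neq2$, every position of $q$ relative to $2$ is already covered by Proposition~\ref{usedtobeProp5.5}(i)(a)--(c) (and by Proposition~\ref{Proplpintolq}(i) in the $\ell_p$ case), while for $0<p<1$ the choice $q=1/s>1$ always lands in Proposition~\ref{usedtobeProp5.5}(ii) (resp.\ Proposition~\ref{Proplpintolq}(ii)). So the direct argument closes for every $s\in(0,1)$, which is how the paper proceeds; simply delete the reduction. Note, however, that your own case enumeration exposes a point this route genuinely cannot handle (and which the paper glosses over): for $p=2$ one gets $L_2^{(s)}\isometric L_q$ with $q=2/s>2$, and $L_2\lip L_q$ is \emph{true} (Gaussian variables give $\ell_2\lisometric L_q$), so no contradiction arises; the case $p=2$ of part (i) must be settled by the Enflo-type argument (Corollary~\ref{Enflo} or Theorem~\ref{snowtarget}). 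The sequence-space case $p=2$ is fine, since $\ell_2\notlip\ell_q$ for $q\neq2$.
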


\begin{proof} (i) Consider first the case $1\le p<\infty$. Given $0<s<1$, we can write $s={p}/{q}$ for some $p<q<\infty$. Using Theorem~\ref{MendelNaooor} (iii),  $L_p^{({p}/{q})}\isometric L_q$, hence if $L_p\lip L_p^{(s)}$ it would follow that $L_p\lip L_q$, an absurdity. 

Suppose now that $0<p<1$ and let $0<s<1$. We can write $s=1/q$ for some $1<q<\infty$. Another appeal to Theorem~\ref{MendelNaooor} (ii) gives $(L_p,d_p^{1/q})\isometric L_q$, therefore if $(L_p,d_p)\lip L_p^{(s)}$ then we would obtain $(L_p,d_p)\lip L_q$, thus contradicting Proposition~\ref{usedtobeProp5.5} (ii).

To show (ii), we argue exactly as in (i), replacing Theorem~\ref{MendelNaooor} with Proposition~\ref{sellp} in combination with the linear structure of the $\ell_p$-spaces.
\end{proof}

It was pointed out in \cite{Albiac2008}  that the quasi-Banach space $\ell_p$ (respectively, $L_p$) does not Lipschitz embed into the quasi-Banach space $\ell_q$ (respectively, $L_q$) for $0< p<q\le1$. Moreover, the author was able to prove a stronger result for $L_p$-spaces, namely, every Lipschitz map from the quasi-Banach space $\ell_p$ (respectively, $L_p$) into the quasi-Banach space $\ell_q$ (respectively, $L_q$) is constant. The re-statement of these results  in the metric context has the following form. 

\begin{theorem}\label{Albiac} Suppose $0< p<q\le1$. Then,
\begin{enumerate}
\item[(i)] $\mellp\notlip (\ell_q, d_q^{p/q})$.
\item[(ii)] $\mclp\notlip (L_q, d_q^{p/q})$.
\end{enumerate}
\end{theorem}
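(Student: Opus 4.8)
The plan is to reduce the metric statement to the quasi-Banach statement quoted from \cite{Albiac2008}, so that Theorem~\ref{Albiac} becomes a translation between two languages rather than a new computation. The key observation is that for $0<p\le 1$ the metric $d_p$ on $\ell_p$ is exactly the $p$-th power of the quasi-norm, $d_p(x,y)=\|x-y\|_p^p$, and that snowflaking the metric $d_q$ by the exponent $p/q$ produces $d_q^{p/q}(x,y)=\|x-y\|_q^{q\cdot (p/q)}=\|x-y\|_q^p$. Thus a Lipschitz embedding $f\colon(\ell_p,d_p)\to(\ell_q,d_q^{p/q})$ is, by definition, a map satisfying $A^{-1}\|x-y\|_p^p\le\|f(x)-f(y)\|_q^p\le B\|x-y\|_p^p$, and raising everything to the power $1/p$ shows this is precisely a Lipschitz embedding of the quasi-Banach space $\ell_p$ into the quasi-Banach space $\ell_q$ in the sense used in \cite{Albiac2008}.

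First I would record this dictionary explicitly for both statements (i) and (ii), since the same identity $d_r^{\,r}=\|\cdot\|_r^{\,r}$ holds verbatim for $L_p$ and $L_q$ as for the sequence spaces. Second, I would invoke the result of \cite{Albiac2008} as restated in the paragraph preceding the theorem: every Lipschitz map from the quasi-Banach space $\ell_p$ into $\ell_q$ (respectively $L_p$ into $L_q$) is constant when $0<p<q\le 1$. In particular there is no Lipschitz \emph{embedding}, since an embedding is injective and hence nonconstant on any space with more than one point. Translating back through the exponent $1/p$, this says exactly that $\mellp\notlip(\ell_q,d_q^{p/q})$ and $\mclp\notlip(L_q,d_q^{p/q})$, which is the assertion to be proved.

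I expect the only genuine subtlety to be bookkeeping about which side gets raised to which power, and the verification that the Lipschitz embedding conditions \eqref{Lipembedding} transform correctly under the monotone bijection $t\mapsto t^{1/p}$ on $[0,\infty)$. Concretely, if $f$ is a Lipschitz embedding for the metrics $d_p$ and $d_q^{p/q}$ with constants $A,B$, then for all $x,y$ one has $A^{-1}\|x-y\|_p^p\le\|f(x)-f(y)\|_q^p\le B\|x-y\|_p^p$; applying $t\mapsto t^{1/p}$ (which preserves the inequalities because it is increasing) yields $A^{-1/p}\|x-y\|_p\le\|f(x)-f(y)\|_q\le B^{1/p}\|x-y\|_p$, so $f$ is a genuine bi-Lipschitz embedding of the quasi-Banach spaces, contradicting the nonexistence of nonconstant Lipschitz maps. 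The main obstacle, such as it is, is simply to make sure the cited result from \cite{Albiac2008} is phrased at the level of maps between the \emph{quasi-Banach} spaces (so that the exponent change is legitimate), rather than already at the level of the snowflaked metrics; once that is pinned down, the proof is a one-line reduction for each of (i) and (ii).
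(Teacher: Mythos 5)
Your reduction is exactly what the paper does: Theorem~\ref{Albiac} is presented there as ``the re-statement of these results in the metric context,'' i.e.\ the proof \emph{is} the translation, via $d_r=\Vert\cdot\Vert_r^{\,r}$ and the exponent bookkeeping you carry out, back to the quasi-Banach nonembeddability results of \cite{Albiac2008}. Your version is correct and simply makes that dictionary explicit.
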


 Notice that Theorem \ref{snowtarget} extends Theorem~\ref{Albiac}. We end  this section with an alternative proof of the following strengthening of the first assertion in Theorem~\ref{Albiac}.

\begin{proposition}\label{snowfrechettarget} Suppose
$0< p,q\le1$. Then $\ell_{p}\notlip \ell_q^{(s)}$ for any $0<s<1$.
\end{proposition}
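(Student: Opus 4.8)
The plan is to rule out the embedding with the Enflo-type invariant, exactly through Proposition~\ref{restriction}; the guiding idea is that $s$-snowflaking multiplies the supremum Enflo-type by $1/s>1$, while $\mellp$ already sits at the very bottom of the Enflo-type scale. Concretely, I would argue by contradiction. Assume $\ell_p\lip\ell_q^{(s)}$, that is $(\ell_p,d_p^{\,1})\lip(\ell_q,d_q^{\,s})$. Applying Proposition~\ref{restriction} with $s_1=1$ and $s_2=s$ forces
\[
\textrm{E-type}(\ell_p)\ \ge\ \frac{\textrm{E-type}(\ell_q)}{s}.
\]
Since every metric space has Enflo-type $1$, the right-hand side is at least $1/s>1$. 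So the entire argument collapses to the single input $\textrm{E-type}(\ell_p)=1$ for $0<p\le 1$: it gives $1\ge 1/s$, contradicting $0<s<1$.

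The only nontrivial step is therefore to confirm that for $0<p\le 1$ the space $\mellp$ carries no Enflo-type strictly bigger than $1$ (this is the value $\tau(p)=1$ recorded in Theorem~\ref{snowtarget}). I would check this directly on the standard combinatorial cube $C=\{x_u\}_{u\in\{-1,1\}^n}$ given by $x_u=\sum_{i=1}^n u_i e_i$. In the metric $d_p$ every diagonal has length $d_p(x_u,x_{-u})=\sum_{i=1}^n|2u_i|^p=n2^{p}$, while every edge has length $2^{p}$; there are $2^{n-1}$ diagonals and $n2^{n-1}$ edges. Substituting these into the defining inequality~\eqref{etype} with exponent $r$ and cancelling the common factor yields $n^{r}\le K^{r}n$, i.e.\ $n^{r-1}\le K^{r}$ for all $n$, which fails for every $r>1$. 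Hence $\mellp$ has Enflo-type $r$ only for $r\le 1$, so $\textrm{E-type}(\ell_p)=1$ and the proof is complete (the value $\textrm{E-type}(\ell_q)\ge 1$ needed above is automatic).

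The main, and essentially only, obstacle is this last verification: one must make precise that the additive metric $d_p(x,y)=\sum|x_i-y_i|^p$ behaves like an $\ell_1$-type sum at the level of cubes, so that the spreading effect of snowflaking the target cannot be absorbed; everything else is formal bookkeeping through Lemma~\ref{enflolemma} and Proposition~\ref{restriction}. I would close with a remark that this is a purely metric-invariant route, alternative to the linear/quasi-Banach argument underlying Theorem~\ref{Albiac}. Indeed, in the sub-range $0<s\le p/q$ (in particular whenever $p\ge q$) one can instead reduce to Theorem~\ref{Albiac}(i): since $sq\le p$, Theorem~\ref{embeddings}(i) gives $(\ell_{sq},d_{sq})\lip\mellp$, so $\ell_p\lip\ell_q^{(s)}$ would yield $\ell_{sq}\lip\ell_q^{(sq/q)}$, which is impossible. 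The Enflo-type argument, however, has the advantage of covering the whole range $0<s<1$ uniformly, including the delicate regime $s$ close to $1$ where no such reduction is available.
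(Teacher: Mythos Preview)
Your proof is correct. The Enflo-type computation on the Hamming cube is clean and on the nose: for $0<p\le1$ the metric $d_p$ is genuinely additive over coordinates, so the diagonal/edge ratio gives $n^{r-1}\le K^{r}$ and kills every $r>1$. With $\textrm{E-type}(\ell_p)=1$ in hand, Proposition~\ref{restriction} finishes exactly as you wrote. Your closing remark about the sub-range $s\le p/q$ via Theorem~\ref{Albiac}(i) is also fine, though not needed.

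The paper, however, takes a different route for this particular proposition. It splits into two cases. When $0<q\le p\le1$ it simply quotes that there is \emph{no nonconstant Lipschitz map} from $(\ell_p,d_p)$ to $(\ell_q,d_q)$ (Proposition~\ref{nolabel}), hence a fortiori none into $\ell_q^{(s)}$. When $0<p\le q\le1$ it runs a telescoping argument: assuming $K^{-1}\|x-y\|_p^{p/s}\le\|f(x)-f(y)\|_q^{q}\le K\|x-y\|_p^{p/s}$, it chains $z_k=x_1+\cdots+x_k$ through the $q$-subadditivity of $\|\cdot\|_q^q$ to obtain
\[
\Big\|\sum_{i=1}^n x_i\Big\|_p^{p/s}\ \lesssim\ \sum_{k=1}^n\|x_k\|_p^{p/s},
\]
and then invokes the Aoki--Rolewicz theorem to conclude that $\ell_p$ would carry an equivalent $p/s$-norm, impossible because $p/s>p$.

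In effect, your argument is the Enflo-type method already sketched globally in Theorem~\ref{snowtarget}, with the merit that you supply a direct verification of $\tau(p)=1$ for $p\le1$ where the Naor--Schechtman UMD machinery is unavailable; it is purely metric and handles all $0<s<1$ at once. The paper's argument for this proposition is instead a self-contained quasi-Banach obstruction, avoiding the Enflo-type apparatus entirely and pinning the failure on the optimal $p$-convexity exponent of $\ell_p$. Both are short; they illuminate different invariants blocking the embedding.
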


\begin{proof}  If $0< q\le p\le 1$, there is actually no Lipschitz map.
  If $0< p\le q\le1$, suppose there is $f:\ell_p\to \ell_q$ so that for some constant $K>0$, 
  $$K^{-1}\Vert x-y\Vert_p^{p/s}\le \Vert f(x)-f(y)\Vert_q^{q}\le K\Vert x-y\Vert_p^{p/s},\quad\forall x,y\in \ell_{p}.$$
Given any $\{x_i\}_{i=1}^n\subset\ell_p$, denote $z_k=x_1+\dots+x_k$ and put $z_0=0$. Then,
\begin{align*}
\Big\Vert\sum_{i=1}^n x_i\Big\Vert_p^{p/s}&=\Vert z_n-z_0\Vert_p^{p/s}\\
  &\le K^s\Vert f(z_n)-f(z_0)\Vert_q^q\\
& \le  K^s\sum_{k=1}^n\Vert f(z_k)-f(z_{k-1})\Vert_q^q\\
& \le  K^{2s}\sum_{k=1}^n\Vert z_k-z_{k-1}\Vert_p^ {p/s}\\
&=K^{2s}\sum_{k=1}^n\Vert x_k\Vert_p^{p/s}.
\end{align*}
By the Aoki-Rolewicz theorem this would imply that space $\ell_{p}$ seen as a quasi-Banach space can  be equipped with an equivalent ${p}/{s}$-quasi-norm,  which is impossible because $p/s>p$ (see \cite[Lemma 2.7]{KaltonPeckRoberts1984}).
\end{proof}
\newpage{}
\section{Concluding remarks and open questions}
The following tables, to be read clockwise summarize the Lipschitz embeddability status of the classical Lebesgue spaces endowed with their ad-hoc metrics or their snowflakings.  
\begin{table}[ht]
\begin{center}
\begin{tabular}{|c|c|c|}\hline
& & \\
$\lip$ & $\ell_q$ &   $L_q$\\
& & \\
 \hline
 & & \\
$\ell_p$         & if and only if $\left\{\begin{array}{l}
                          0<p<q\le 1\\
                          p=q\\
                    \end{array}\right.$  & if and only if  $\left\{\begin{array}{l}
                          0<p<q\le 1\\
                          0<q\le p\le 2\\
                          p=q\\
                          p=2, q>0\\
                    \end{array}\right.$ \\
                    & & \\
& & (isometric) \\ 
& & \\    
 \hline
 & & \\
$L_p$  & Never unless $p=q=2$ & if and only if  $\left\{\begin{array}{l}
                          0<p,q\le 1\\
                          0<q\le p\le 2\\
                          p=q\\
                          p=2, q>0\\
                    \end{array}\right.$ \\ 
                 & & \\   
             &  (isometric) &  (isometric)  \\
             & & \\
 \hline
             & & \\
$\ell_p^{(s)}$& if  $\left\{\begin{array}{l}
                          1\le p\le q \textrm{ and }s=p/q\\
                          0< p\le 1\le q \textrm{ and }s=1/q\\
                          p=q=2 \textrm{ and }0<s\le1\\
                    \end{array}\right.$ 
             &  if  $\left\{\begin{array}{l}
                          1\le p\le q \textrm{ and }s=p/q\\
                          0< p\le 1\le q \textrm{ and }s=1/q\\
                          p=q=2 \textrm{ and }0<s\le1\\
                    \end{array}\right.$\\      
             &  &  \\ 
 \hline
 & & \\
$\isometric$ & $\ell_q$ &   $L_q$\\
& & \\
 \hline
 & & \\
 $L_p^{(s)}$& if $p=q=2 \textrm{ and }0<s\le1$ 
             &  if  $\left\{\begin{array}{l}
                          1\le p\le q \textrm{ and }s=p/q\\
                          0< p\le 1\le q \textrm{ and }s=1/q\\
                          0<p=q\le 1 \textrm{ and }0<s\le1\\
                          p=q=2 \textrm{ and }0<s\le1\\
                          1\le p=q<2 \textrm{ and }p/2\le s\le1\\
                          0< q\le 1\le p\le 2 \textrm{ and }s=q\\

                    \end{array}\right.$\\
 & & \\      
 \hline
 \end{tabular}
 \end{center}
\end{table}

\newpage{}

\begin{table}[ht]
\begin{center}
\begin{tabular}{|c|c|c|}
\hline
$\notlip$ &  $\ell_q^{(t)}$ & $L_q^{(t)}$\\
 \hline
 & & \\
 $\ell_p^{(s)}$&  $\begin{array}{l}
                          \textrm{ if } 0<p,q\le 1 \textrm{ with }\\
                           \\
                          s=1 \textrm{ and } 0<t<1\\
                           \end{array}$ &
             if  $\left\{\begin{array}{l}
                          0<p,q\le 1 \textrm{ and }s<t\\
                          p,q\ge 1 \textrm{ and }\ds\frac{s}{t}<\ds\frac{\min\{p,2\}}{\min\{q,2\}}\\
                          0<p\le 1\le q \textrm{ and }\ds\frac{s}{t}<\ds\frac{1}{\min\{q,2\}}\\
                          0<q\le 1\le p \textrm{ and }\ds\frac{s}{t}<\min\{p,2\}\\
                    \end{array}\right.$\\
 & & \\      
\hline
\end{tabular}
\end{center}
\end{table}

\noindent Our work leaves a myriad of  open questions and sets what we think is a different and alternative approach to the classical study of the nonlinear geometry of Banach spaces. We will introduce several parameters and highlight several questions that we found particularly interesting.

$\bullet$ If one wants to measure quantitatively how close can we possibly be to a Lipschitz embedding between two different $\ell_p$-spaces it is natural to define the following parameter for $0<p\neq q<\infty$ $$s_{p\to q}=\sup\{s\le 1: (\ell_p, d_p^s)\lip (\ell_q, d_q)\},$$

\noindent with the convention that $s_{p\to q}=0$ if the latter set is empty. In this paper we obtained tight estimates for $s_{p\to q}$ in several situations. However, when $1\le p\le 2< q$ we proved that $\frac{p}{q}\le s_{p\to q}\le\frac{p}{2}$ but we do not know if we can close this gap. Similarly, for $2\le p< q$ we have $\frac{p}{q}\le s_{p\to q}\le 1$ and $1$ cannot be attained.
Rather frustrating is the fact that we have no indication on the parameter $s_{p\to q}$ when $0<q<p\le 1$. A weaker question would be:

\begin{question}\label{q1}
If $0< q<p\le 2$, do we have $(\ell_p, d_p^{s})\lip\mellq$ for some $0<s<1$?
\end{question}

$\bullet$ Another natural parameter attached to a metric space $(\mathcal M,d)$ is  $$\sigma_{\mathcal M}=\sup\{t\ge 0:(\mathcal M, d^{1-t})\lip (\mathcal M, d)\}.$$
$\sigma_{\mathcal M}$ is related to the classes $\sd$ and $\nsd$ introduced in Section~\ref{Section2}. For instance, $\sd$ is the class of metric spaces such that $\sigma_{\mathcal M}=1$.
We obtained that $\frac{2-p}{2}\le\sigma_{L_p}\le 1$ if $1\le p<2$ and that $\sigma_{L_p}=1$ for $0<p\le 1$ but we have no estimate whatsoever neither for $\sigma_{L_p}$ when $p>2$ nor for $\sigma_{\ell_p}$ when $0<p<\infty$. A related question is:

\begin{question}\label{q2}
If $0< p<q<1$, do we have $(\ell_p, d_p^{s})\lip\mellq$ for some $0<s<1$?
\end{question}

 We can answer Question \ref{q2} affirmatively if $\sigma_{\ell_p}$ is nontrivial, i.e., $\sigma_{\ell_p}>0$.

\smallskip

$\bullet$ We now introduce a last parameter $$\beta_{\mathcal M}=\sup\{t\ge 0:(\mathcal M, d)\lip (\mathcal M, d^{1-t})\}.$$
This parameter is related to the class $\nst$ and its complement, the class 
$$\complement{\nst}=\Big\{(\mathcal M,d): (\mathcal M,d)\lip (\mathcal M,d^s)\;\text{for some}\,0<s\le1 \Big\},$$ formed by the collection of metric spaces $\mathcal M$ that Lipschitz embed in their own snowflaked versions $\mathcal M^{(s)}$ for some $0<s<1$. Indeed, a metric space belongs to $\nst$ if and only if $\beta_{\mathcal M}=0$.

\smallskip

It is quite clear from the sections above that the class $\nst$ is very large but what can be said about its complement. Possible candidates have to be amongst the metric spaces with infinite Enflo-type for instance. Unfortunately it can be rather complicated to compute or even estimate the Enflo-type of a given metric space. However, we know one family in $\complement{\nst}$, namely, ultrametric spaces.

\begin{question}\label{q3}
Describe the class $\complement{\nst}$? Are there metric spaces $\mathcal M$ other than ultrametrics such that $\beta_{\mathcal M}>0$?
\end{question}

$\bullet$ As a byproduct of our work we obtain a new and more direct path to prove that $\ell_q\coarse\ell_p$ for $1\le p<q\le 2$ by composing  the embedding of the $2/q$-snowflaked version of $\ell_q$ into $\ell_2$ and the coarse embedding from $\ell_2$ into $\ell_p$ of Nowak \cite{Nowak2006} based on Dadarlat-Guentner criterion from \cite{DadarlatGuentner2003}. As we already mentioned this is not possible if $2\le p<q$. On the other hand, we know that $L_p\coarse L_q$ if $0<p< q<\infty$ but the following one remains elusive:
\begin{question}\label{q4}
If $2< p< q<\infty$, does $L_p\coarse\ell_q$?
\end{question}

The answer to this question would be affirmative if we could solve positively the following stronger question:

\begin{question}\label{q5}
If $2< p<\infty$, does $L_p\coarse\ell_p$?
\end{question}

\medskip
\noindent{\it Aknowledgements.} The second author is very grateful to Bill Johnson and Gideon Schechtman for various enlightening discussions on the topics covered in this article.

\begin{bibsection}
\begin{biblist}
\bib{AbrahamBartalNeiman2011}{article}{
  author={Abraham, I.},
  author={Bartal, Y.},
  author={Neiman, O.},
  title={Advances in metric embedding theory},
  journal={Adv. Math.},
  fjournal={Advances in Mathematics},
  volume={228},
  year={2011},
  number={6},
  pages={3026--3126},
}

\bib{Aharoni1974}{article}{
  author={Aharoni, I.},
  title={Every separable metric space is Lipschitz equivalent to a subset of $c\sp {+}\sb {0}$},
  journal={Israel J. Math.},
  volume={19},
  date={1974},
  pages={284--291},
}

\bib{Albiac2008}{article}{
  author={Albiac, F.},
  title={Nonlinear structure of some classical quasi-Banach spaces and F-spaces},
  journal={J. Math. Anal. Appl.},
  volume={340},
  date={2008},
  pages={1312\ndash 1325},
}

\bib{AlbiacKalton2006}{book}{
  author={Albiac, F.},
  author={Kalton, N. J.},
  title={Topics in Banach space theory},
  series={Graduate Texts in Mathematics},
  volume={233},
  publisher={Springer},
  place={New York},
  date={2006},
  pages={xii+373},
}

\bib{Aronszajn1976}{article}{
  author={Aronszajn, N.},
  title={Differentiability of Lipschitzian mappings between Banach spaces},
  journal={Studia Math.},
  volume={57},
  date={1976},
  pages={147--190},
}

\bib{Assouad1983}{article}{
  author={Assouad, P.},
  title={Plongements lipschitziens dans ${\it R}\sp {n}$},
  language={French, with English summary},
  journal={Bull. Soc. Math. France},
  volume={111},
  date={1983},
  pages={429--448},
}

\bib{BenyaminiLindenstrauss2000}{book}{
  author={Benyamini, Y.},
  author={Lindenstrauss, J.},
  title={Geometric nonlinear functional analysis. Vol. 1},
  series={American Mathematical Society Colloquium Publications},
  volume={48},
  publisher={American Mathematical Society},
  place={Providence, RI},
  date={2000},
}

\bib{Bretagnolleetal1965}{article}{
  author={Bretagnolle, J.},
  author={Dacunha-Castelle, D.},
  author={Krivine, J.-L.},
  title={Fonctions de type positif sur les espaces $L\sp {p}$},
  language={French},
  journal={C. R. Acad. Sci. Paris},
  volume={261},
  date={1965},
  pages={2153--2156},
}

\bib{BuragoBuragoIvanov2001}{book}{
  author={Burago, D.},
  author={Burago, Y.},
  author={Ivanov, S.},
  title={A course in metric geometry},
  series={Graduate Studies in Mathematics},
  volume={33},
  publisher={American Mathematical Society},
  address={Providence, RI},
  year={2001},
  pages={xiv+415},
  isbn={0-8218-2129-6},
  mrclass={53C23},
  mrnumber={1835418 (2002e:53053)},
  mrreviewer={Mario Bonk},
}

\bib{Christensen1973}{article}{
  author={Christensen, J. P. R.},
  title={Measure theoretic zero sets in infinite dimensional spaces and applications to differentiability of Lipschitz mappings},
  note={Actes du Deuxi\`eme Colloque d'Analyse Fonctionnelle de Bordeaux (Univ. Bordeaux, 1973), I, pp. 29--39},
  journal={Publ. D\'ep. Math. (Lyon)},
  volume={10},
  date={1973},
  pages={29--39},
}

\bib{DadarlatGuentner2003}{article}{
 AUTHOR = {M. Dadarlat and E. Guentner},
     TITLE = {Constructions preserving {H}ilbert space uniform embeddability
              of discrete groups},
   JOURNAL = {Trans. Amer. Math. Soc.},
    VOLUME = {355},
      YEAR = {2003},
    NUMBER = {8},
     PAGES = {3253--3275 (electronic)},
}

\bib{Enflo1970}{article}{
  author={Enflo, P.},
  title={Uniform structures and square roots in topological groups. I, II},
  journal={Israel J. Math. 8 (1970), 230-252; ibid.},
  volume={8},
  date={1970},
  pages={253--272},
}

\bib{Enflo1978}{article}{
  author={Enflo, P.},
  title={On infinite-dimensional topological groups},
  conference={ title={S\'eminaire sur la G\'eom\'etrie des Espaces de Banach (1977--1978)}, },
  book={ publisher={\'Ecole Polytech.}, place={Palaiseau}, },
  date={1978},
  pages={Exp. No. 10--11, 11},
}

\bib{Gromov1993}{article}{
  author={Gromov, M.},
  title={Asymptotic invariants of infinite groups},
  conference={ title={Geometric group theory, Vol.\ 2}, address={Sussex}, date={1991}, },
  book={ series={London Math. Soc. Lecture Note Ser.}, volume={182}, publisher={Cambridge Univ. Press}, place={Cambridge}, },
  date={1993},
  pages={1--295},
}

\bib{Heinonen2001}{book}{
  author={Heinonen, J.},
  title={Lectures on analysis on metric spaces},
  series={Universitext},
  publisher={Springer-Verlag},
  place={New York},
  date={2001},
}

\bib{JohnsonRandrianarivony2006}{article}{
  author={Johnson, W. B.},
  author={Randrianarivony, N. L.},
  title={$l\sb p\ (p>2)$ does not coarsely embed into a Hilbert space},
  journal={Proc. Amer. Math. Soc.},
  volume={134},
  date={2006},
  pages={1045--1050 (electronic)},
}

\bib{Kahane1981}{article}{
  author={Kahane, J. P.},
  title={H\'elices et quasi-h\'elices},
  booktitle={Mathematical analysis and applications, {P}art {B}},
  series={Adv. in Math. Suppl. Stud.},
  volume={7},
  pages={417--433},
  publisher={Academic Press},
  address={New York},
  year={1981},
  mrclass={43A35 (26A99 46G99)},
  mrnumber={634251 (84i:43005)},
  mrreviewer={Author's review},
}

\bib{KaltonPeckRoberts1984}{book}{
  author={Kalton, N. J.},
  author={Peck, N. T.},
  author={Roberts, J. W.},
  title={An $F$-space sampler},
  series={London Mathematical Society Lecture Note Series},
  volume={89},
  publisher={Cambridge University Press},
  place={Cambridge},
  date={1984},
}

\bib{Mankiewicz1973}{article}{
  author={Mankiewicz, P.},
  title={On the differentiability of Lipschitz mappings in Fr\'echet spaces},
  journal={Studia Math.},
  volume={45},
  date={1973},
  pages={15--29},
}

\bib{MendelNaor2004}{article}{
  author={Mendel, M.},
  author={Naor, A.},
  title={Euclidean quotients of finite metric spaces},
  journal={Adv. Math.},
  volume={189},
  date={2004},
  pages={451--494},
}

\bib{MendelNaor2008}{article}{
  author={Mendel, M.},
  author={Naor, A.},
  title={Metric cotype},
  journal={Ann. of Math.(2)},
  volume={168},
  date={2008},
  pages={247\ndash 298},
}

\bib{Naorthesis}{article}{
  author={Naor, A.},
  title={Master Thesis},
  date={1998},
}

\bib{NaorNeiman}{article}{
  author={Naor, A.},
  author={Neiman, O.},
  title={Assouad's theorem with dimension independent of the snowflaking},
  journal={arXiv:1012.2307},
  volume={},
  date={},
  pages={},
}

\bib{NaorSchechtman2002}{article}{
  author={Naor, A.},
  author={Schechtman, G.},
  title={Remarks on non linear type and Pisier's inequality},
  journal={J. Reine Angew. Math.},
  volume={552},
  date={2002},
  pages={213--236},
}

\bib{Nowak2006}{article}{
  author={Nowak, P. W.},
  title={On coarse embeddability into {$l_p$}-spaces and a conjecture of {D}ranishnikov},
  journal={Fund. Math.},
  volume={189},
  year={2006},
  number={2},
  pages={111--116},
}

\bib{Pisier1986b}{article}{
  author={Pisier, G.},
  title={Probabilistic methods in the geometry of Banach spaces},
  booktitle={Probability and analysis (Varenna, 1985)},
  series={Lecture Notes in Math.},
  volume={1206},
  pages={167\ndash 241},
  publisher={Springer},
  place={Berlin},
  date={1986},
}

\bib{Schoenberg1938}{article}{
  author={Schoenberg, I. J.},
  title={Metric spaces and positive definite functions},
  journal={Trans. Amer. Math. Soc.},
  volume={44},
  date={1938},
  pages={522--536},
}

\bib{Talagrand1992}{article}{
  author={Talagrand, M.},
  title={Approximating a helix in finitely many dimensions},
  journal={Ann. Inst. H. Poincar\'e Probab. Statist.},
  volume={28},
  year={1992},
  number={3},
  pages={355--363},
}

\bib{Westonandall}{article}{
  author={Faver, T.},
  author={Kochalski, K.},
  author={Murugan, M.},
  author={Verheggen, H.},
  author={Wesson, E.},
  author={Weston, A.},
  title={Classifications of ultrametric spaces according to roundness},
  journal={arXiv:1201.6669v2},
  volume={},
  date={},
}

\bib{Yu2000}{article}{
  author={Yu, G.},
  title={The coarse Baum-Connes conjecture for spaces which admit a uniform embedding into Hilbert space},
  journal={Invent. Math.},
  volume={139},
  date={2000},
  pages={201--240},
}

\end{biblist}
\end{bibsection}

\end{document}